\theoremstyle{definition}
\theoremstyle{plain}
\newtheorem{prop}[subsection]{Proposition}
\newtheorem{lem}[subsection]{Lemma}
\newcommand{\Hom}{\mathrm{Hom}}
\newcommand{\mbf}{\mathbf}
\newcommand{\mbb}{\mathbb}
\newcommand{\mrm}{\mathrm}
\newcommand{\C}{\mbb C}
\newcommand{\G}{\mathrm G}
\newcommand{\M}{\mathfrak M}
\newcommand{\bM}{\mbf M}
\newcommand{\bv}{\mbf v}
\newcommand{\bw}{\mbf w}
\newcommand{\bx}{\mbf x}
\renewcommand{\o}{\text{{\bf o}}}
\renewcommand{\i}{\text{{\bf i}}}
\newcommand{\ve}{\varepsilon}
\newcommand{\fS}{\mathfrak S}
\newcommand{\GL}{\mrm{GL}}
\newcommand{\w}{\upomega}
\newtheorem{thrm}{Theorem}
\title[Spaltenstein varieties of pure dimension]{Spaltenstein varieties of pure dimension
%\\ [.2em] \smaller{{\em In memory of my uncle Renyi Huang}}
}
\author{Yiqiang Li}
\address{
Department of Mathematics\\
University at Buffalo\\
the State University of New York 
%Buffalo, NY 14260
}
\email{yiqiang@buffalo.edu}
\keywords{
%$\sigma$-quiver variety,
% partial Springer resolution,  
Spaltenstein varieties,
pure dimensionality,
nilpotent Slodowy slices of classical groups, 
symplectic geometry,
$\mbb C^*$-action,
$\sigma$-quiver varieties.
%rectangular symmetry, symmetric pairs, $\K$-matrix
} 
\subjclass[2010]{
%16S30, %Universal enveloping algebras of Lie algebras
%14J50,  %Automorphisms of surfaces and higher-dimensional varieties
14L35, % Classical groups (geometric aspects)
%14L24  %Geometric invariant theory
%14L30  %Group actions on varieties or schemes (quotients) 
%13A50, % Actions of groups on commutative rings; invariant theory 
%14E15,  %Global theory and resolution of singularities
%20G43 Schur and $q$-Schur algebras
20G07,  		%Linear algebraic groups and related topics ,Structure theory
51N30. %Geometry of classical groups 
53D05  %Symplectic manifolds, general
}
\begin{document}
\maketitle

\centerline{\it In memory of my uncle Renyi Huang}

%\centerline{
%\begin{CJK*}{UTF8}{gbsn}
%?????????????
%\end{CJK*}
%}
%\section{}
%\subsection{}

%\tableofcontents

\begin{abstract}
We show that Spaltenstein varieties of classical groups are pure dimensional when the Jordan type of 
the nilpotent element involved is an even or odd partition. 
We further show that they are Lagrangian in the  partial resolutions of the associated nilpotent Slodowy slices, from which 
their dimensions are known to be one half of the dimension of the partial resolution minus the dimension of the nilpotent orbit.
The results are then extended to the $\sigma$-quiver-variety setting. 
\end{abstract}

\section{Introduction}

\subsection{Spaltenstein varieties}
\label{Spaltenstein}

Let $G$ be a complex reductive group.
Fix a parabolic subgroup $P$ of $G$ and a nilpotent element $x$ in Lie$(G)$.
The Spaltenstein variety of the triple $(G, P, x)$ is defined to be
\[
X^P_x =\{ gP\in G/P | g^{-1}xg \in \mathfrak n_{\mrm{Lie} (P)}\}
\]
where 
$\mathfrak n_{\mrm{Lie} (P)}$ is the nilpotent radical of $\mrm{Lie}( P)$.
When $P$ is a Borel subgroup, a Spaltenstein variety
is  more commonly referred to as  a Springer fiber~\cite{Spr76}. 
%Spaltenstein varieties carry much representation-theoretic information and play a vital role in Springer theory~\cite{Spr76, BM83},
%category $O$~\cite{B08, W11} and invariants of tangles~\cite{Kh04}.
In general, a Spaltenstein variety is neither smooth nor irreducible. 
So an immediate question of substantial interest 
is if it is pure dimensional, 
%(a.k.a. equidimensional), 
that is if the dimensions of irreducible components of an $X^P_x$ are the same. 
It was answered in the affirmative in the following two fundamental cases by Spaltenstein~\cite{Sp76, Sp77} in the 70s,
and independently by Steinberg~\cite{St74}  for Case (a) when $G$ is a general linear group.
\begin{itemize}
\item[(a)]  $P$ is a Borel subgroup.
\item[(b)] $G$ is a general linear group.
\end{itemize}
%Steinberg~\cite{St74} also obtained (a) in the type-$A$ case independently.
%Vargars~\cite{V79} also obtained (a) in the type-$A$ case independently. 
Spaltenstein further
provided an example in~\cite[11.6]{Sp82}  showing that
the variety  $X^P_x$ is not always pure dimensional for a nilpotent element in $\mathfrak{so}_8$ of Jordan type $(1, 2^2, 3)$.
%Specifically,  when $G=\mrm{SO}_8(\mbb C)$,  $P$ is chosen
%such that $G/P$ is isomorphic to  the variety of isotropic subspaces $F_2\subseteq F_3$ in $\mbb C^8$ of
%dimension 2 and 3, respectively,  and $x$ is of Jordan type $(1, 2^2, 3)$,
%then $X^P_x$ is the union of  two irreducible components of respective dimensions $2$ and $3$.
This example is recalled in Section~\ref{Example}, together with 
a few more in {\it loc. cit.} where $X_x^P$ can be described explicitly with  fresh light casted upon.
Beyond Steinberg and Spaltenstein's results, little is known on the pure dimensionality of $X_x^P$.

In this paper, we shall prove
%present a third case where $X_x^P$ is pure dimensional as follows. 

\begin{thrm}
\label{Main-1}
The Spaltenstein variety $X^P_x$  is pure dimensional if
\begin{itemize}
\item[(c)] $G$ is classical and the Jordan type of  $x$ is an even or odd partition,
i.e., of the form
$1^{\bw_1} 3^{\bw_3}  5^{\bw_5} \cdots$ or $2^{\bw_2} 4^{\bw_4} 6^{\bw_6}\cdots$.
\end{itemize}
\end{thrm}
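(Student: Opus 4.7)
The plan is to realize $X^P_x$ as a Lagrangian subvariety of a smooth symplectic partial resolution $\widetilde{\mathcal{S}}^P_x$ of the nilpotent Slodowy slice $\mathcal{S}_x$ at $x$. Once this is in place, the purely symplectic fact that a Lagrangian subvariety of a smooth symplectic variety is pure dimensional (of half the ambient dimension) immediately yields the theorem; the dimension formula advertised in the abstract will also fall out by a direct count.

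First, I would set up the type $A$ template, where the result is classical. For $G$ a general linear group, the partial Grothendieck--Springer map, restricted through the Slodowy slice, yields a smooth symplectic variety $\widetilde{\mathcal{S}}^P_x\to\mathcal{S}_x$ (realized as a Nakajima quiver variety in the sense of Maffei). The Kazhdan--Kostant cocharacter arising from an $\mathfrak{sl}_2$-triple $(x,h,f)$ defines a $\mathbb{C}^*$-action that contracts $\mathcal{S}_x$ to the single fixed point $x$, and lifts to $\widetilde{\mathcal{S}}^P_x$; since the symplectic form has weight two, the attracting locus is isotropic, and a dimension count identifies it as a Lagrangian subvariety equal to the fiber over $x$, which is precisely $X^P_x$. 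This is an alternative route to case (b) of Steinberg--Spaltenstein.

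Second, for $G$ classical of type $B$, $C$, or $D$, I would install an involution $\sigma$ on the ambient type $A$ quiver variety--along the lines of the $\sigma$-quiver varieties advertised in the abstract--so that the $\sigma$-fixed locus of $\widetilde{\mathcal{S}}^P_x$ is a smooth symplectic partial resolution of the classical Slodowy slice $\mathcal{S}_x^G$, and the $\sigma$-fixed locus of the type $A$ Spaltenstein variety is exactly the classical $X^P_x$. The Kazhdan--Kostant $\mathbb{C}^*$-action commutes with $\sigma$, so the attracting-locus argument descends: a $\sigma$-stable Lagrangian in a $\sigma$-stable smooth symplectic variety whose form restricts to a symplectic form on the fixed locus is again Lagrangian in the fixed symplectic manifold. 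This yields pure dimensionality of $X^P_x$.

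The step I expect to be the main obstacle, and the one where the parity hypothesis plays its decisive role, is verifying that when the Jordan type of $x$ is an even or odd partition the $\sigma$-fixed locus of $\widetilde{\mathcal{S}}^P_x$ is indeed smooth with nondegenerate restricted symplectic form. For mixed-parity partitions such as $(1, 2^2, 3)$ this fails: the fixed locus can acquire spurious components of the wrong dimension, which matches Spaltenstein's counterexample and shows the parity hypothesis is essential, not merely convenient. Concretely, I would analyse the tangent representation of $\sigma$ at a fixed point of the smooth partial resolution and show that, under the even/odd assumption, the $-1$-eigenspace of $\sigma$ pairs perfectly with itself under the symplectic form, forcing the $+1$-eigenspace (the tangent space to the fixed locus) to be a symplectic subspace of the correct dimension. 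The remaining dimension comparison and the Lagrangian transfer are then routine.
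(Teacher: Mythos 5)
Your overall framework (symplectic contraction plus $\sigma$-quiver varieties) is the right one, but the mechanism you propose breaks down at exactly the point where the theorem is hard. First, the Kazhdan--Kostant $\mbb C^*$-action coming from the $\mathfrak{sl}_2$-triple has weight $2$ on the symplectic form, and a weight-$2$ contraction only makes the fiber over the fixed point \emph{isotropic}; this is the Borho--MacPherson inequality $\dim X^P_x \leq \frac{1}{2}\dim \widetilde S_{e,x}$, which holds for every Jordan type, including Spaltenstein's counterexample $(1,2^2,3)$ in $\mathfrak{so}_8$. To conclude Lagrangian one needs an action of weight $1$ on the form, and no such action can exist uniformly for all $(e,x)$ --- it would contradict the counterexample. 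The paper's actual input is the orientation-scaling action $t\circ_{\ve}$ on the Nakajima variety, which does have weight $1$, and the entire content of the proof is Proposition~\ref{WW}: this action is compatible with $\sigma$ (up to gauge, $t\circ_{\ve}[\bx]=t\circ_{-c\ve}[\bx]$) precisely when $\bw_i\bw_j=0$ for adjacent vertices, which in type $A$ is the even/odd parity condition on the Jordan type. That is where the hypothesis enters, and your proposal never engages with it.

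Second, you locate the role of parity in the smoothness and nondegeneracy of the $\sigma$-fixed locus, but that is automatic for \emph{all} Jordan types: $\sigma$ preserves the symplectic form, so $\fS_{\zeta}(\bv,\bw)$ is always a smooth symplectic subvariety, and $\widetilde S_{e,x}$ is pure dimensional in general, being a reduced complete intersection. For $(1,2^2,3)$ the partial resolution is a perfectly good $6$-dimensional symplectic variety; what fails is that $X^P_x$ (with components of dimensions $3$ and $2$) is not Lagrangian in it. Relatedly, your ``Lagrangian transfer'' --- that the $\sigma$-fixed locus of a $\sigma$-stable Lagrangian is again Lagrangian in the fixed symplectic submanifold --- is false in general: such a fixed locus is only isotropic, and its dimension can drop. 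This is not a routine step; it is precisely the assertion the theorem makes, and it is exactly what the counterexample violates. To repair the argument you would need to replace the Kazhdan--Kostant action by the weight-$1$ orientation action, prove it descends to $\fS_{\zeta}(\bv,\bw)$ under the parity hypothesis, and then invoke the weight-$1$ contraction criterion (Theorem~\ref{symplectic}) on the $\sigma$-fixed locus itself.
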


Our approach is to study Spaltenstein varieties in the context of  symplectic geometry. 

\subsection{Symplectic geometry and $\mbb C^*$-action}
As is gradually known, complex symplectic geometry provides 
a new and conceptual  way to understand the pure dimensionality of a complex variety. 
Precisely, there is the following remarkable result, whose proof can be found in the proof of Proposition 
%1.2.2 and 
5.4.7 in~\cite{G09}. 
Note that we work in the setting of complex algebraic geometry.

\begin{thrm}
%[\cite{G09} 5.4]
\label{symplectic}
Suppose that  $p: \widetilde Y  \to Y$ is a proper morphism from a  smooth symplectic  algebraic variety, with algebraic symplectic $2$-form,   to an affine variety. 
%Then any fiber of $p$ is isotropic. 
Suppose further that both varieties  admit a $\mbb C^*$-action, compatible with $p$. 
If  the following two conditions hold:
\begin{itemize}
\item the $\mbb C^*$-action provides a contraction of $Y$ to its fixed-point locus $Y^{\mbb C^*}$,
\item the $\mbb C^*$-action on $\widetilde Y$ has weight $1$ on the symplectic form $\omega$ on $\widetilde Y$:
$t^* \omega = t\omega$, $\forall t\in \mbb C^*$, 
\end{itemize}
then the fiber $p^{-1}(Y^{\mbb C^*})$, or rather its associated reduced scheme,  is Lagrangian in $\widetilde Y$. 
\end{thrm}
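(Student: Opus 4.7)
The plan is to leverage the $\mathbb C^*$-action and the properness of $p$ to establish the Lagrangianity of $Z := p^{-1}(Y^{\mathbb C^*})$ via a Bialynicki--Birula decomposition and a local weight analysis at $\mathbb C^*$-fixed points.

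First I would extend the contraction from $Y$ up to $\widetilde Y$. For any $\tilde y\in\widetilde Y$ the orbit $t\mapsto t\cdot p(\tilde y)$ extends to a morphism $\mathbb A^1\to Y$ by hypothesis; the valuative criterion applied to the proper map $p$ then lifts this to a morphism $\mathbb A^1\to\widetilde Y$ whose value at $0$ is $\mathbb C^*$-fixed. Hence $\widetilde Y$ itself contracts to $\widetilde Y^{\mathbb C^*}$, and the closed $\mathbb C^*$-stable subscheme $Z$ is contracted to $\widetilde Y^{\mathbb C^*}\subseteq Z$.

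The heart of the argument is a weight analysis at a fixed point $x\in\widetilde Y^{\mathbb C^*}$. Decompose $T_x\widetilde Y=\bigoplus_k V_k$ into $\mathbb C^*$-eigenspaces. The weight-$1$ condition $t^*\omega=t\omega$ forces $\omega$ to induce a non-degenerate pairing $V_k\times V_{1-k}\to\mathbb C$, so $\dim V_k=\dim V_{1-k}$, and the subspace $V_{\leq 0}:=\bigoplus_{k\leq 0}V_k$ is isotropic (since $k+k'<1$ for $k,k'\leq 0$) of dimension $n:=\tfrac{1}{2}\dim\widetilde Y$, hence Lagrangian. On the $Y$-side, contraction forces $T_{p(x)}Y$ to carry only non-negative weights with $(T_{p(x)}Y)_0=T_{p(x)}Y^{\mathbb C^*}$; equivariance of $dp_x$ then yields $V_{<0}\subseteq\ker dp_x$ and $dp_x(V_0)\subseteq T_{p(x)}Y^{\mathbb C^*}$, whence $V_{\leq 0}\subseteq T_xZ$. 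A dimension bound on the proper, $\mathbb C^*$-stable fibres $p^{-1}(y)$ for $y\in Y^{\mathbb C^*}$---isotropic in $\widetilde Y$ by the weight-$1$ pairing---then pins $T_xZ^{\mathrm{red}}=V_{\leq 0}$, a Lagrangian.

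In a final step I would globalise: the Bialynicki--Birula decomposition $\widetilde Y=\bigsqcup_F W_F^+$ makes every smooth point of $Z^{\mathrm{red}}$ flow to some $x\in\widetilde Y^{\mathbb C^*}$; propagating the fixed-point computation along the $\mathbb C^*$-flow, using $t^*\omega=t\omega$ together with the $\mathbb C^*$-stability of $Z$, transfers both the isotropy $\omega|_{Z^{\mathrm{red}}}=0$ and the dimension equality $\dim Z^{\mathrm{red}}=n$ to every smooth point of $Z^{\mathrm{red}}$.

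The hard part will be the reverse inclusion $T_xZ^{\mathrm{red}}\subseteq V_{\leq 0}$ at the fixed locus: eliminating genuine positive-weight tangent directions to $Z$ requires combining the weight-$1$ condition on $\omega$ with the properness of $p$ in a careful way reminiscent of Kaledin's isotropy theorem for symplectic contractions, and correctly treating any non-reduced scheme structure on $Z$ that must be killed off by passing to the reduction.
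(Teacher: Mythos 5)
A preliminary remark: the paper does not prove this theorem itself; it is quoted with the proof attributed to that of Proposition 5.4.7 in [G09], so the comparison is really between your sketch and Ginzburg's argument. Your setup is correct as far as it goes: the valuative-criterion lift of orbits, the weight decomposition $T_x\widetilde Y=\bigoplus_k V_k$ with $\omega$ pairing $V_k$ against $V_{1-k}$, and the identification of $V_{\leq 0}$ as a Lagrangian subspace of dimension $n=\frac{1}{2}\dim\widetilde Y$ are all right and are genuine ingredients of the standard proof. But there is a real gap, which you flag yourself: you establish neither the isotropy of $Z=p^{-1}(Y^{\mathbb C^*})$ away from the fixed locus nor the lower bound $\dim C\geq n$ for every irreducible component $C$, and the route you chose (attracting cells for $t\to 0$ plus a tangent-space computation at fixed points) will not close up. Concretely: (i) the inclusion $V_{\leq 0}\subseteq T_xZ$ concerns the Zariski tangent space of the scheme $Z$, which can be much larger than $\dim_x Z$, so it gives no lower bound on the dimension of any component of $Z^{\mathrm{red}}$; (ii) the Bialynicki--Birula attracting cell has tangent space $V_{\geq 0}$ at a fixed point, of dimension $n+\dim V_0$, and is \emph{not} isotropic (the $V_0$--$V_1$ pairing survives), so propagating the fixed-point computation along the attracting flow cannot yield either isotropy or an upper bound on $\dim Z$; and (iii) your appeal to the fibres $p^{-1}(y)$ being "isotropic by the weight-$1$ pairing" is circular, since that is essentially the statement being proved.

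The missing idea is to use the repelling rather than the attracting direction. For $z\in Z$ one has $p(tz)=t\cdot p(z)=p(z)$, so the whole orbit lies in the complete fibre $p^{-1}(p(z))$ and therefore \emph{both} limits $\lim_{t\to 0}tz$ and $\lim_{t\to\infty}tz$ exist; conversely, if $\lim_{t\to\infty}tz$ exists then so does $\lim_{t\to\infty}t\cdot p(z)$, and since the contraction hypothesis forces $\mathcal O(Y)$ to have only non-negative weights this happens only for $p(z)\in Y^{\mathbb C^*}$. Hence $Z$ is exactly the union of the repelling sets $W_F^-$ over the fixed components $F\subseteq\widetilde Y^{\mathbb C^*}$. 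Each $W_F^-$ is smooth with tangent space $V_{\leq 0}$ along $F$, hence of dimension exactly $n$ by your own weight count; and it is isotropic, because for $u,v\in T_zW_F^-$ the limits $u_\infty=\lim_{t\to\infty}dt(u)$ and $v_\infty=\lim_{t\to\infty}dt(v)$ exist in $T(W_F^-)$, so that $\omega_z(u,v)=t^{-1}\,\omega_{tz}(dt(u),dt(v))\to 0\cdot\omega_{z_\infty}(u_\infty,v_\infty)=0$. Thus $Z^{\mathrm{red}}=\bigcup_F\overline{W_F^-}$ is a finite union of $n$-dimensional isotropic closed subvarieties, i.e.\ Lagrangian. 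I would encourage you to reorganize your write-up around this characterization of $Z$: it subsumes your fixed-point computation, makes the properness of $p$ and the affineness of $Y$ do their actual work, and disposes of the isotropy and both dimension bounds at once, without any appeal to Kaledin's theorem.
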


Being Lagrangian implies that $p^{-1}(Y^{\mbb C^*})$ is pure dimensional, provided that $\widetilde Y$ is so, 
and moreover its dimension is one half of that of $\widetilde Y$. 

It is exactly the framework of Theorem~\ref{symplectic} that Spaltenstein variety is put under and 
that the proof of  Theorem~\ref{Main-1} falls out, which we shall discuss  in more details as follows.

\subsection{Slodowy slices and their partial resolutions}
\label{Slodowy}
Retaining the setting in Section~\ref{Spaltenstein},
the cotangent bundle $T^* (G/P)$  of $G/P$ yields a partial resolution of singularities of 
the closure of a 
nilpotent orbit $\mathcal O_{e}$ in $\mrm{Lie} (G)$ for a Richardson element $e$:
\[
\pi'_P: T^* (G/P) \to \overline{\mathcal O}_{e}.
\]
Here the terminology `partial' refers to the fact that the restriction of $\pi'_p$ to the orbit $\mathcal O_e$ 
is generically finite, but not isomorphic, in general. 
When $P$ is a Borel,  the morphism $\pi'_P$ is the Springer resolution to the  nilcone of $G$ 
and a genuine resolution of singularities. 
On the other hand, fixing an $\mathfrak{sl}_2(\mbb C)$-triple $(x, y, h)$ in $\mrm{Lie}(G)$, one can consider 
the Slodowy slice $S_{x}: = x+ \ker \mrm{ad} (y)$ 
%\mathfrak z_y$, where $ \mathfrak z_y$ is the centralizer of $y$ in the Lie algebra $\mrm{Lie}(G)$ 
(see~\cite{Sl80}).
Setting $S_{e, x}= \overline{\mathcal O}_{e} \cap S_x$ and 
$\widetilde S_{e,x} = (\pi'_P)^{-1} ( S_{e, x})$ (so that $S_{e,x}$ is nonempty if and only if   $x\in \overline{\mathcal O}_e$). 
The above map $\pi'_P$ restricts to a partial resolution of the nilpotent Slodowy slice $S_{e,x}$:
\begin{align}
\label{pi}
\pi_P: \widetilde S_{e, x} \to S_{e, x},
\quad \mbox{with} \ \pi^{-1}_P (x)= X^P_x. 
\end{align}
Again when $P$ is a Borel, the morphism $\pi$ is a genuine resolution of singularities. 
The cotangent bundle $T^* (G/P)$ carries a canonical symplectic structure, i.e., a closed $2$-form, and 
from which the variety $\widetilde S_{e, x}$ inherits one, say $\omega$, as well. 
The variety $S_{e,x}$ is clearly an affine variety. 
%Thanks to the first statement in Theorem~\ref{symplectic}, 
%one immediately deduces
%that $X^P_x$ is isotropic in $\widetilde S_{e, x}$, which says that all irreducible components of $X^P_x$ have maximal possible dimension
%one half of that of $\widetilde S_{e,x}$.
%In particular
Thanks to~\cite[Corollary 3.5 b)]{BM83}, it is know that 
\begin{align}
\label{isotropic}
\dim X^P_x \leq \frac{1}{2} \dim \widetilde S_{e, x}.
\end{align}

In the cases (a) and (b) in Section~\ref{Spaltenstein}, the above inequality becomes equality and 
$X^P_x$ is Lagrangian in $\widetilde S_{e,x}$. 
We shall show that the same holds  for the case (c) in  Theorem~\ref{Main-1}.
Moreover,  $\widetilde S_{e, x}$ is  pure dimensional in general: it is a reduced complete intersection in $T^*(G/P)$ of  dimension
$\dim T^*(G/P) - \dim \mathcal O_x$ (see~\cite[Corollary 1.3.8]{G08}). 
Therefore we actually have a stronger version of Theorem~\ref{Main-1}.

\begin{thrm}
\label{Main-2}
If $G$ is a classical group and the
Jordan type of $x$  is an even or odd partition, then
the Spaltenstein variety $X^P_x$ is Lagrangian in $\widetilde S_{e, x}$ in (\ref{pi}), and hence
of pure dimension $\frac{1}{2} \dim T^*(G/P) - \frac{1}{2}\dim \mathcal O_x$.
\end{thrm}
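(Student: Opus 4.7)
The plan is to apply Theorem~\ref{symplectic} to the partial resolution $\pi_P : \widetilde S_{e,x} \to S_{e,x}$ from \eqref{pi}. Since $S_{e,x}$ is affine and $\pi_P$ is proper by construction, the remaining inputs to verify are (i) smoothness of $\widetilde S_{e,x}$ as a symplectic variety, (ii) a $\mathbb{C}^*$-action contracting $S_{e,x}$ to $\{x\}$, and (iii) a compatible lift to $\widetilde S_{e,x}$ of positive weight on the symplectic form $\omega$ inherited from $T^*(G/P)$, normalized to weight one by passing to a cover $\mathbb{C}^* \to \mathbb{C}^*$ if necessary.

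For (ii) and (iii) I would employ the Kazhdan $\mathbb{C}^*$-action associated to the $\mathfrak{sl}_2$-triple $(x,y,h)$. Let $\rho : SL_2(\mathbb{C}) \to G$ denote the corresponding homomorphism and set $\lambda(t) = \rho(\mathrm{diag}(t^{-1},t))$. The action $t \cdot z = t^2\, \mathrm{Ad}(\lambda(t)) z$ on $\mathrm{Lie}(G)$ preserves the Slodowy slice $S_x$, fixes $x$, and contracts $S_{e,x}$ onto $\{x\}$; lifting by the $G$-action on $G/P$ combined with fiber scaling on $T^*(G/P)$ produces a $\mathbb{C}^*$-action restricting to $\widetilde S_{e,x}$ compatibly with $\pi_P$. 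The $\mathbb{C}^*$-fixed locus of $S_{e,x}$ is precisely $\{x\}$, so $\pi_P^{-1}(S_{e,x}^{\mathbb{C}^*}) = X^P_x$.

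The essential and nontrivial ingredient is (i). In general $\widetilde S_{e,x}$ is only a reduced complete intersection in $T^*(G/P)$ and can be singular; indeed Spaltenstein's counterexample of Jordan type $(1,2^2,3)$ in $\mathfrak{so}_8$ shows that the Lagrangian conclusion fails without a further restriction on $x$. Under the hypothesis that $G$ is classical and the Jordan type of $x$ is even or odd, I would realize $\widetilde S_{e,x}$ as a $\sigma$-fixed-point subvariety of a smooth symplectic Nakajima quiver variety of type A, where $\sigma$ is an involution whose fixed subgroup recovers $G$ from a larger general linear group. Since the fixed locus of a reductive group action on a smooth symplectic variety is itself smooth symplectic, this yields the smoothness needed to invoke Theorem~\ref{symplectic}, which then delivers the Lagrangian conclusion. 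The dimension formula is immediate from the Lagrangian dimension count combined with \cite[Corollary 1.3.8]{G08}.

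The main obstacle is step (i): constructing the $\sigma$-quiver realization of $\widetilde S_{e,x}$ under the parity hypothesis and verifying that the resulting fixed locus is smooth of the expected dimension. This is precisely where the even-or-odd assumption must intervene, for mixed-parity Jordan types the $\sigma$-fixed subvariety would have the wrong dimension or acquire additional singular strata, consistent with the failure of pure-dimensionality in Spaltenstein's counterexample.
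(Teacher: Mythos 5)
There is a genuine gap, and it sits exactly at the step you dismiss in one clause. Theorem~\ref{symplectic} requires a $\mbb C^*$-action whose weight on the symplectic form is $1$; the Kazhdan action $t\cdot z=t^2\,\mrm{Ad}(\lambda(t))z$ that you propose has weight $2$ on $\omega$, and ``passing to a cover $\mbb C^*\to\mbb C^*$'' moves the weight in the wrong direction (pulling back along $s\mapsto s^k$ multiplies all weights by $k$). To get weight $1$ you would have to \emph{descend} along the squaring map, i.e.\ define $s\cdot z=s\,\mrm{Ad}(\lambda(\sqrt{s}))z$, which is well defined only when $\mrm{Ad}(\lambda(-1))=\mrm{id}$ and $\lambda(-1)$ acts trivially on $G/P$ --- that is, only when $x$ is an even nilpotent. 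This is precisely where the even-or-odd hypothesis on the Jordan type must intervene, and your proposal never makes that connection; as written, no $\mbb C^*$-action of weight $1$ is ever produced, so Theorem~\ref{symplectic} cannot be invoked. (The paper sidesteps the Kazhdan action entirely: it realizes $\pi_P$ as $\pi_{\sigma}$ on $\sigma$-quiver varieties via Proposition~\ref{8.3.4} and uses the orientation action $t\circ_{\ve}$, which has weight $1$ intrinsically; the whole content is then the compatibility of this action with $\sigma$, Propositions~\ref{t-sigma}, \ref{WW} and \ref{Nice}, where the parity hypothesis enters as the condition $\bw_i\bw_j=0$ on adjacent vertices.)

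A second, related misplacement: you identify smoothness of $\widetilde S_{e,x}$ as ``the essential and nontrivial ingredient'' and suggest it can fail for mixed-parity Jordan types. In fact $\widetilde S_{e,x}=(\pi'_P)^{-1}(S_x)$ is smooth symplectic for \emph{every} nilpotent $x$, by transversality of the Slodowy slice to the $G$-orbits; it is smooth in Spaltenstein's $\mathfrak{so}_8$ counterexample as well. What fails there is not smoothness but the existence of a contracting $\mbb C^*$-action of weight $1$ on $\omega$ (equivalently, the Lagrangian property of the central fiber). So the quiver-variety realization is not needed for smoothness, and using it only for that purpose while keeping the weight-$2$ Kazhdan action leaves the actual crux unaddressed. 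If you want to pursue your route rather than the paper's, the correct repair is to verify that an even-or-odd Jordan type forces all $\mrm{ad}(h)$-weights to be even, construct the half-Kazhdan action by descent, and check it preserves $\widetilde S_{e,x}$ and contracts $S_{e,x}$ to $x$; that would give a direct proof of Theorem~\ref{Main-2} bypassing Theorem~\ref{Main-3}, but none of those verifications appear in the proposal.
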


With the above discussion, the proof of Theorem~\ref{Main-2} (and hence Theorem~\ref{Main-1}) finally boils down 
to a search of the desired $\mbb C^*$-actions for $\widetilde S_{e,x}$ and $S_{e, x}$ to apply Theorem~\ref{symplectic}.  

Both varieties $\widetilde S_{e, x}$ and  $S_{e, x}$ admit a natural $\mbb C^*$-action induced from the $\mathfrak{sl}_2(\mbb C)$-triple $(x, y, h)$ so that $\pi_P$ is $\mbb C^*$-equivariant.
Moreover the $\mbb C^*$-action provides a contraction of $S_{e, x}$ to $\{x\}$, its $\mbb C^*$-fixed point (\cite[1.4]{G08}). 
However, the $\mbb C^*$-action on the symplectic structure $\omega$ has weight $2$, 
instead of weight $1$ as required in Theorem~\ref{symplectic}. 
This defect  is expected in light of Spaltenstein's example: 
there is no uniform $\mbb C^*$-action on $\widetilde S_{e,x}$ and $S_{e,x}$ for all $e$ and $x$ satisfying all conditions in 
Theorem~\ref{symplectic}.

Instead we  obtain the desired $\mbb C^*$-actions in the setting of 
Nakajima quiver varieties~\cite{N94, N98} and their variants in~\cite{Li18}, from which this paper is grown out.
%This is the reason why we have to assume that $G$ is a classical group. 
%A possible case that the above observation can not detect is when $X^P_x$ is equidimensional, but not Lagranian. 
%However, whether such case appears is not clear to the author.  

\subsection{$\mbb C^*$-action on Nakajima varieties}

Thanks to the works of Nakajima~\cite{N94} and Maffei~\cite{M05}, 
the proper map $\pi_P$ for $G$ being a general linear  group has an incarnation as Nakajima quiver varieties attached
to a type-$A$ quiver.
%  whose underlying graph  is a Dynkin graph of type $A$:
\begin{align}
\label{piA1}
\pi_A: \M_{\zeta}(\bv, \bw)_{A} \to \M_1(\bv,\bw)_A.
\end{align}
Here $\bv$ and $\bw$ are tuples of integers determined by the Jordan types of the Richardson element $e$ and $x$ respectively,
and $\zeta$ is a generic parameter used for the stability condition. 
The orientation induces intrinsically a $\mbb C^*$-action on the quiver varieties 
$\M_{\zeta}(\bv, \bw)_{A}$ and $\M_1(\bv,\bw)_A$.
This action satisfies all conditions in Theorem~\ref{symplectic} and hence provides a conceptual proof of 
the pure dimensionality of $X_x^P$ for $G$ being a general linear group, i.e., Case~\ref{Spaltenstein}(b). 

If $G$ is classical, i.e., an orthogonal or symplectic group, 
the map $\pi_P$ admits a quiver description $\pi_{\sigma, A}$, as a restriction of $\pi_A$,
in the recent work~\cite{Li18}, with 
$\widetilde S_{e,x}$ and $S_{e,x}$ realized as the fixed-point 
loci $\fS_{\zeta}(\bv,\bw)_A$ (resp. $\fS_1(\bv,\bw)_A$) 
of Nakajima varieties $\M_{\zeta}(\bv, \bw)_{A}$  (resp. $\M_1(\bv,\bw)_A$)  under a specific  involution $\sigma$:
\begin{align}
\label{piA2}
\pi_{\sigma,A}: \fS_{\zeta}(\bv, \bw)_{A} \to \fS_1(\bv,\bw)_A.
\end{align}
The $\mbb C^*$-actions on Nakajima varieties can not be compatible with the involution in general, again due to
Spaltenstein's example. 
The crucial observation is that the place where the $\mbb C^*$-action and the involution $\sigma$ is compatible is 
where $X^P_x$ is Lagrangian.  
To this end, we show that
the tuple $\bw$ under the conditions in Theorem~\ref{Main-2} are the compatible places
for the $\mbb C^*$-action and the involution, hence providing a proof of Theorems~\ref{Main-2} and~\ref{Main-1} finally.

The arguments are indeed not restricted to type-$A$ graphs. We are able to establish a result that is valid for 
all Dynkin graphs. 
We drop the subscript $A$ in (\ref{piA1}) and (\ref{piA2}) to denote the morphism between
Nakajima varieties of a fixed Dynkin graph.

\begin{thrm}
\label{Main-3}
Assume that $\bw_i\bw_j=0$ if there is an edge joining $i$ and $j$. 
Then the fiber of the $\mbb C^*$-fixed point under $\pi_{\sigma}$ is Lagrangian 
in $\fS_{\zeta}(\bv, \bw)$. 
\end{thrm}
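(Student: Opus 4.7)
The plan is to apply Theorem~\ref{symplectic} directly to $\pi_\sigma: \fS_{\zeta}(\bv, \bw) \to \fS_1(\bv, \bw)$. Properness of $\pi_\sigma$ is inherited from $\pi$, $\fS_1(\bv, \bw)$ is affine as a closed subvariety of the affine quotient $\M_1(\bv,\bw)$, and $\fS_{\zeta}(\bv,\bw)$ is smooth and symplectic as the fixed locus of the symplectic involution $\sigma$ on $\M_{\zeta}(\bv, \bw)$, with inherited $2$-form $\omega|_{\fS_\zeta}$. What remains is to produce a $\mbb C^*$-action on both varieties that is $\pi_\sigma$-equivariant, has weight $1$ on $\omega$, and contracts $\fS_1(\bv,\bw)$ to its fixed-point locus.

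On the ambient Nakajima varieties I would take the standard edge-scaling action. Fix an orientation $\Omega$ of the Dynkin graph and let $t \in \mbb C^*$ act by $x_a \mapsto x_a$ for $a \in \Omega$, $x_{a^*} \mapsto t\, x_{a^*}$ for $a^* \in \bar\Omega$, and $(i_k, j_k) \mapsto (t^{p_k} i_k, t^{q_k} j_k)$ on the framings, with $p_k + q_k = 1$ at every framed vertex. This action rescales the moment map by $t$, so it descends to the GIT quotients; it is $\pi$-equivariant, has weight $1$ on the symplectic form $\omega = \sum d x_a \wedge d x_{a^*} + \sum d i_k \wedge d j_k$, and contracts $\M_1(\bv,\bw)$ to its $\mbb C^*$-fixed locus via the induced $\mbb Z_{\geq 0}$-grading of its coordinate ring. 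Restriction to the closed subvariety $\fS_1(\bv,\bw)$ preserves the contraction.

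The heart of the matter is to choose $\Omega$ and the framing weights $(p_k, q_k)$ so that this action commutes with $\sigma$, hence restricts to $\fS_\zeta$ and $\fS_1$. The involution $\sigma$ from \cite{Li18} permutes arrows (possibly with a sign twist) and framing maps in a precise way dictated by the classical-group structure, so $\mbb C^*$-$\sigma$-compatibility forces the weights to be constant on $\sigma$-orbits. When $\sigma$ exchanges an arrow $a \in \Omega$ with an arrow in $\bar\Omega$, or couples $i_k$ with $j_k$ at a $\sigma$-fixed framed vertex, the weight constraints clash (one is forced into a non-integer value $p_k = q_k = 1/2$). The hypothesis $\bw_i\bw_j = 0$ on adjacent vertices is exactly what sidesteps every such clash: on each edge at least one endpoint carries no framing at all, so the constraint $p_k + q_k = 1$ is vacuous at the troublesome vertices, and a case analysis by $\sigma$-orbit type on the Dynkin graph (fixed vertices vs.\ $\sigma$-pairs, fixed edges vs.\ $\sigma$-swapped edges) produces a consistent $\sigma$-invariant weight assignment. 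Once this is in hand, Theorem~\ref{symplectic} yields that $\pi_\sigma^{-1}(\fS_1(\bv,\bw)^{\mbb C^*})$ is Lagrangian in $\fS_\zeta(\bv,\bw)$. I expect the main obstacle to be precisely this compatibility check: it depends on a concrete description of $\sigma$ on the quiver data from \cite{Li18}, and the bookkeeping must be carried out edge by edge, using $\bw_i\bw_j = 0$ to neutralize every potential obstruction.
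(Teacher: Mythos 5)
Your high-level strategy coincides with the paper's: realize the fiber as $\pi_\sigma^{-1}$ of the $\mbb C^*$-fixed point and invoke Theorem~\ref{symplectic}, the whole content being the construction of a $\sigma$-compatible $\mbb C^*$-action of weight $1$ on the symplectic form. But your mechanism for achieving compatibility has a genuine gap. You ask for the action to commute with $\sigma$ \emph{on the nose} on the quiver data, by a suitable choice of orientation $\Omega$ and framing weights $(p_k,q_k)$ with $p_k+q_k=1$. This is impossible whenever $\bw\neq 0$: the transpose factor $\tau$ of $\sigma=aS_{\w}\tau$ sends $p_i\mapsto -q_i^*$ and $q_i\mapsto p_i^*$ at \emph{every} framed vertex $i$ (not just at vertices singled out by the diagram automorphism), so strict commutation forces $p_i=q_i=1/2$ at every $i$ with $\bw_i\neq 0$; likewise $\tau$ swaps $h$ and $\bar h$ on every edge, forcing weight $1/2$ on every arrow. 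The hypothesis $\bw_i\bw_j=0$ does not ``make the constraint vacuous at the troublesome vertices'': it removes no framed vertex and no edge, so the clash you correctly identify is not neutralized by it, and your proposed case analysis by $\sigma$-orbit type (which implicitly treats $\sigma$ as a diagram automorphism) cannot close.

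The missing idea is that compatibility is only needed \emph{up to the gauge group} $\G_{\bv}$, i.e., on the quotient $\M_{\zeta}(\bv,\bw)$ rather than on $\bM(\bv,\bw)$. The paper first shows that the two integral weight choices $(p,q)=(0,1)$ and $(1,0)$ induce the \emph{same} action on $\M_{\zeta}(\bv,\bw)$ via the central scalar $(t\,\mrm{id}_{V_i})_i$ (Lemma~\ref{two-one}); this is what absorbs the $p\leftrightarrow q$ swap caused by $\tau$. It then shows $\sigma(t\circ_{\ve}[\bx])=t\circ_{-c\ve}\sigma([\bx])$ and absorbs the resulting orientation reversal by the one-parameter gauge element $g_\kappa=(t^{\kappa_i}\mrm{id}_{V_i})_i$, where $\kappa$ is a proper $2$-coloring of the Dynkin graph with all framed vertices in the class $\kappa=1$ --- and \emph{this} is where the hypothesis $\bw_i\bw_j=0$ on adjacent vertices enters (Proposition~\ref{WW}(2)): it guarantees the framed vertices form an independent set that can be placed in one color class, so that $g_\kappa$ fixes the framing data where it is nonzero while reversing the arrow weights. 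Separately, you never address the reflection-functor factor $S_{\w}$ of $\sigma$, which is not a permutation of the quiver data and requires its own equivariance check against the defining relations (R1)--(R4), as in Lemma~\ref{t-S}. Without the passage to gauge equivalence and the $S_{\w}$ check, the proof cannot be completed along the lines you describe.
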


The main content of the paper is to study the compatibility of the $\mbb C^*$-action and the automorphism $\sigma$ in order to 
prove Theorem~\ref{Main-3}. 
When the signature $c^0$ of the diagram isomorphism in the automorphism $\sigma$ is $-1$, we can drop the assumption on $\bw$ in Theorem~\ref{Main-3} and 
this more general result is stated in Theorem~\ref{Main-4}. 

\subsection{Layout of the paper}
In Section ~\ref{pre}, we recall Nakajima varieties and their $\sigma$ variants.
In Section~\ref{new}, we study the compatibility of $\mbb C^*$-action with the various isomorphisms in the definition of $\sigma$-quiver varieties.
In Section~\ref{Example}, we reproduce  Spaltenstein's examples in~\cite[11.6, 11.8]{Sp82} with new observations on being Lagrangian.  

\subsection{Acknowledgements}
We thank the anonymous referee for helpful comments and insightful suggestions. 
This work was  partly supported by the NSF grant DMS 1801915. 
%\setcounter{tocdepth}{1}
%\tableofcontents

\subsection{Updates after publication}

(1).
Theorem~\ref{Main-3} implies the following result stronger than Theorem~\ref{Main-1}: the variety $X_x^P$ is pure dimensional if  
\begin{itemize}
\item[(c')] $G$ is classical and the Jordan type, say
$1^{\bw_1} 2^{\bw_2} 3^{\bw_3}   \cdots$, of $x$ satisfies  $\bw_i \bw_{i+1}=0$ for all $i$. 
\end{itemize}

I thank Elek Balazs for pointing this out to me.

(2). A nilpotent orbit whose partition is either purely even or purely odd is called an even orbit in literature. 
I thank Bingyong Sun for pointing this out to me.

(3). The variety $X_x^P$ is invariant under row reduction and is conjectured to be true under column reduction. 
See the arXiv paper \href{https://arxiv.org/abs/2002.04422}{\color{blue}{arXiv:2002.04422}}, Remark 2.4.3.

\section{Preliminaries on quiver varieties}
\label{pre}

In the section, we recall briefly Nakajima varieties~\cite{N94, N98} and their $\sigma$ variants in~\cite{Li18}. 
Our treatment follows closely Sections 1-4 in ~\cite{Li18}.

\subsection{Nakajima varieties}
\label{lavw}

Let $\Gamma$ be a Dynkin graph. Let
$I$ and $H$ be  the vertex and arrow set, respectively.
For each arrow $h$, let $\o (h)$ and $\i (h)$ be its outgoing and incoming vertex.
There is an involution on the arrow set $\bar \empty: H \to H$, $h \mapsto \bar h$ such that
$\o (\bar h) = \i (h) $ and $\i (\bar h) = \o (h)$.
Let $V=\oplus_{i\in I} V_i$ and $W=\oplus_{i\in I}W_i$ be two finite dimensional $I$-graded 
vector spaces over the complex field $\mbb C$
of dimension vectors $\bv = (\bv_i)_{i\in I}$ and $\bw =(\bw_i)_{i\in I}$, respectively.
The framed representation space of the graph $\Gamma$ in $V\oplus W$ is 
\begin{align}
\label{bM}
\bM(\bv,\bw)
=\oplus_{h\in H} \Hom (V_{\o (h)}, V_{\i (h)}) \oplus \oplus_{i\in I} \Hom(W_i, V_i) \oplus  \Hom (V_i, W_i).
\end{align}
When $V$ and $W$ shall be highlighted, we  write $\bM(V, W)$ for $\bM(\bv, \bw)$. 
An element in $\bM(\bv, \bw)$ is denoted by $\bx \equiv (x, p, q) \equiv (x_h, p_i, q_i)_{h\in H, i\in I}$ where
$x_h$ is in $\Hom (V_{\o (h)}, V_{\i(h)})$, $p_i$ in $\Hom (W_i, V_i)$ and $q_i$ in $\Hom (V_i, W_i)$.
Let $\ve^0 : H\to \{ \pm 1\}$ be an orientation function such that $\ve^0(h)+\ve^0(\bar h) =0$, $\forall h\in H$.
To a point $\bx \in \bM(\bv, \bw)$, we set 
\begin{align}
\label{a-b}
a_i (\bx) = (q_i, x_h)_{h: \o (h) = i}  \quad
\mbox{and} \quad b_i (\bx) = (p_i, \ve^0(\bar h) x_h)_{h: \i (h) = i}.
\end{align}
The space $\bM(\bv, \bw)$ admits  a symplectic structure with respect to $\ve^0$ given by
\begin{align}
\label{symplectic-form}
\omega (\bx, \bx') =  \mrm{trace} \left (\sum_{i\in I}  b_i(\bx) a_i(\bx') - q_i p'_i  \right ), 
%\sum_{h\in H} \mrm{tr} ( \ve^0(h) x_h x'_{\bar h}) + \sum_{i\in I} \mrm{tr} (p_i q'_i - p'_i q_i),
\quad \forall \bx, \bx' \in \bM(\bv, \bw).
\end{align}
Let 
$\G_{\bv}   = \prod_{i\in I} \GL (V_i)$
act on $\bM(\bv, \bw)$ from the left as follows.
For all $g=(g_i)_{i\in I} \in \G_{\bv}$ and $\bx \in \bM(\bv, \bw)$, we define
$g. \bx = \bx' \equiv (x'_h, p'_i, q'_i)$ where
$x'_h = g_{\i (h)} x_h g^{-1}_{\o (h)}$, $p'_i = g_i p_i $ and $q'_i = q_i g^{-1}_i$ for all $h\in H$ and $i\in I$.
Let 
\[
\mu_{\mbb C} : \bM(\bv, \bw) \to \mrm{Lie} (\G_{\bv})^* 
\]
be the moment map associated to the $\G_{\bv}$-action on the symplectic space $\bM(\bv, \bw)$. 
After identifying $\mrm{Lie} (\G_{\bv}) =\oplus_{i\in I} \mathfrak{gl}(V_i)$ with its dual  $\mrm{Lie} (\G_{\bv})^* $ via the trace form, 
the  $i$-th component of $\mu_{\mbb C}$  is given by
$\mu^{(i)}_{\mbb C}(\bx) = b_i(\bx) a_i(\bx)$.

Let $[\mbf x]$ denote the $\G_{\bv}$-orbit of $\mbf x$ in $\bM(\bv, \bw)$.
%When all components are 0, its $\G_{\bv}$-orbit is  denoted by $[0]$. 

Fix an embedding $\mbb C^I \to \mrm{Lie}(\G_\bv)$ by 
$(\zeta^{(i)}_{\mbb C})_{i\in I}  \mapsto (\zeta^{(i)}_{\mbb C} \mrm{Id}_{V_i})_{i\in I}$
for all $\zeta_{\mbb C} =(\zeta^{(i)}_{\mbb C})_{i\in I} \in \mbb C^I$.
Let
$\Lambda_{\zeta_{\mbb C}} (\bv, \bw)$ be the fiber $\mu^{-1}_{\mbb C}(\zeta_{\mbb C})$.
The group $\G_{\bv}$ acts on $\Lambda_{\zeta_{\mbb C}}(\bv,\bw)$. 

Let $\xi =(\xi_i)_{i\in I} \in \mbb Z^I$.
Fix an element $x =(x_h)_{h\in H}$  in the first component of  $\bM(\bv, \bw)$ and an $I$-graded subspace $S=(S_i)_{i\in }$ of $V$, 
we say that $S$ is $x$-invariant if $x_h( S_{\o (h)} ) \subseteq S_{\i(h)}$ for all $h\in H$.
A point $\bx =(x, p, q)$ in $\bM(\bv, \bw)$ is called $\xi$-semistable  
if the following two  stability conditions are satisfied.
For any  $I$-graded subspaces $S$ and $T$ of $V$  of dimension $\mbf s$ and $\mbf t$, respectively, 
\begin{align}
& \text{
if $S$ is $x$-invariant and $S\subseteq \ker q$, then $\xi \cdot \mbf s \leq 0$,
}
\tag{S1}
\\
& \text{
if $T$ is $x$-invariant and $T \supseteq \mrm{im} \ p$, then
$\xi \cdot \mbf t \leq \xi \cdot \bv$.
}
\tag{S2}
\end{align}
Let $\Lambda^{\xi\text{-}ss}_{\zeta_{\mbb C}} (\bv, \bw)$ be the  $\G_{\bv}$-invariant set of all $\xi$-semistable points in 
$\Lambda_{\zeta_{\mbb C}}(\bv, \bw)$.

Let $\mbf C=(c_{ij})_{i, j\in I}$ is the Cartan matrix of the graph $\Gamma$. We set
\begin{align*}
\begin{split}
R_+  =\{ \gamma \in \mbb N^I | \ ^t\gamma\mbf C \gamma \leq 2\} \backslash \{ 0\},
R_+(\bv)  =\{\gamma \in R_+ | \gamma_i \leq \bv_i, \forall i\in I\},
D_{\gamma}  =\{ a\in \mbb C^I | a\cdot \gamma=0\}.
\end{split}
\end{align*}
A parameter $\zeta=(\xi, \zeta_{\mbb C})\in \mbb Z^I \times \mbb C^I$ is called generic 
if it satisfies 
$\xi \in \mbb Z^I \backslash  \cup_{\gamma\in R_+(\bv)} D_{\gamma}$
or
$\zeta_{\mbb C} \in \mbb C^I \backslash  \cup_{\gamma\in R_+(\bv)} D_{\gamma}$.
From now on, we assume that $\zeta$ is generic. 
When $\zeta$ is generic, the group $\G_{\bv}$ acts freely on $\Lambda^{\xi\text{-}ss}_{\zeta_{\mbb C}}(\bv, \bw)$.
Following Nakajima ~\cite{N94, N98}, we define the quiver variety
attached to the data $(\Gamma, \ve^0, \bv,\bw, \zeta)$
to be
\begin{align}
\label{Nakajima}
\M_{\zeta} (\bv, \bw) = \Lambda^{\xi\text{-}ss}_{\zeta_{\mbb C}}(\bv, \bw)/\G_{\bv},\quad 
\zeta\equiv (\xi, \zeta_{\mbb C}) \in \mbb Z^I \times \mbb C^I \ \mbox{generic}.
\end{align}
Let $\M_0(\bv,\bw)$ be the affinization of $\M_{\zeta}(\bv,\bw)$, with which is equipped 
a projective morphism 
$\pi: \M_{\zeta} (\bv, \bw) \to \M_0 (\bv, \bw).$
Let $\M_1(\bv,\bw)$ be the image of $\M_{\zeta}(\bv,\bw)$ under $\pi$ 
so that $\pi$ factors through a proper
map under the same notation, which is (\ref{piA1}) in type $A$:
\begin{align}
\label{pi}
\pi: \M_{\zeta} (\bv, \bw) \to \M_1 (\bv, \bw).
\end{align}
The variety $\M_{\zeta}(\bv,\bw)$ is smooth  and symplectic with  the latter  induced from $\bM(\bv, \bw)$.

\subsection{$\sigma$-quiver varieties}
In this section, we recall $\sigma$-quiver varieties from~\cite{Li18}.

\subsubsection{Reflection functors}
Recall the Cartan matrix $\mbf C=(c_{ij})$.
For each $i\in I$, we define a bijection $s_i: \mbb Z^I \to \mbb Z^I$ by
$s_i (\xi) = \xi'$ where $\xi'_j = \xi_j  - c_{ji} \xi_i $, $\xi = (\xi_j)_{j\in I}$, $\xi'=(\xi_j')_{j\in I} \in \mbb Z^I$.
Let $\mathcal W$  be the Weyl group generated by $s_i$ for all $i\in I$. 

Let $s_i*_{\bw}\bv$ denote the vector whose $j$-component is $\bv_j$ if $j\neq i$ and whose $i$-th component is 
$\bw_i + \sum_{h: \o (h)=i} \bv_{\i (h)} -\bv_i$. 

The reflection  functor $S_i$ of Nakajima, Lusztig and Maffei~\cite{L00, M02, N03} associated to  the simple reflection $s_i$ is defined to be
\begin{align*}
%\label{Si}
S_i : \M_{\zeta} (\bv, \bw) \to \M_{s_i (\zeta)} (s_i *_\bw \bv, \bw),  [\bx] \mapsto [\bx'], \quad \mbox{if} \ \xi_i < 0 \ \mbox{or}\ \zeta^{(i)}_{\mbb C} \neq 0,
\end{align*}
where the pair ($[\bx], [\bx']$) satisfies the conditions (R1)-(R4) as follows. 
Let 
$V'$ be a  vector space of dimension $ s_i *_\bw \bv$ 
such that  $V_j' = V_j$ if $j\neq i$ and $U_i = W_i \oplus \oplus_{h\in H : \o (h) =i} V_{\i (h)}$.
%and $V_i'= U_i/V_i$. 
\begin{align}
& 
\begin{CD}
0 @>>> V_i'  @>a_i(\bx')>>  U_i @> b_i(\bx) >>  V_i  @>>>  0 \quad \mbox{is exact}, 
\end{CD}
\tag{R1}
\\  
& a_i(\bx) b_i(\bx) - a_i(\bx') b_i(\bx') = \zeta'^{(i)}_{\mbb C},  \quad \zeta'_{\mbb C} = s_i (\zeta_{\mbb C}),  
\tag{R2}
\\
& x_{h} = x'_{h}, p_j = p'_j, q_j=q'_j, \quad  \hspace{1.2cm} \mbox{if} \ \o (h) \neq i, \i (h) \neq i \ \mbox{and} \ j\neq i,  \tag{R3} \\
& \mu_j (\bx) = \zeta^{(j)}_{\mbb C},
% \lambda_j,
\mu_j (\bx') = \zeta'^{(j)}_{\mbb C},
% \lambda_j', 
\quad \hspace{.85cm} \mbox{if}  \ j\neq i.   
\tag{R4}
\end{align}

Since $(s_i (\xi))_i >0$ if $\xi_i < 0$, we can define the reflection $S_i$ when $\xi_i >0$, by switching the roles of $\bx$ and $\bx'$.
So if $\w=s_{i_1}s_{i_2}\cdots s_{i_l} \in \mathcal W$ and $\zeta$ is generic, 
the reflection functor $S_{\w}$ is defined to be the composition of the $S_i$'s:
\begin{align}
\label{sw}
S_{\w} = S_{i_1} S_{i_2} \cdots S_{i_l} : \M_{\zeta}(\bv, \bw) \to \M_{\w(\zeta)} (\w *_\bw \bv, \bw),
\end{align}
where $\w*_\bw \bv$ is a composition of $s_{i_j}*_\bw\bv$'s.

\subsubsection{The transpose $\tau$}
\label{sub-tau}

To any linear transformation $T: E\to E'$ between two vector  spaces, each equipped with a non-degenerate bilinear form $(-, -)_E$ and $(-,-)_{E'}$, 
we define  its right adjoint $T^*: E' \to E$ by the rule 
\[
(T(e), e')_{E'} = (e, T^* (e'))_{E}, \quad \forall e \in E, e' \in E'. 
\]
There is  an isomorphism $\Hom (E, E') \cong \Hom (E', E)$ defined by $T\mapsto T^*$.

Assume that the $i$-th components $V_i$ and $W_i$ of  $V$ and $W$   are equipped with non-degenerate bilinear forms for all $i\in I$. 
We define an automorphism
\begin{align*}
\tau : \bM(\bv, \bw) \to \bM(\bv, \bw), \quad \bx=(x_h, p_i, q_i) \mapsto {}^{\tau} \bx =({}^{\tau} x_h, {}^{\tau} p_i, {}^{\tau}q_i)
\end{align*}
where  ${}^\tau x_h = \ve (h)  x_{\bar h}^*$, ${}^{\tau} p_i = -  q_i^{*}$ and ${}^{\tau} q_i =  p_i^*$ 
for all $h\in H$ and $i\in I$.
This automorphism induces an isomorphism:
\begin{align}
\label{tau}
\tau: \M_{\zeta}(\bv, \bw) \to \M_{-\zeta}(\bv,\bw). 
\end{align}

\subsubsection{Diagram isomorphism $a$}

Let $a$ be an automorphism of $\Gamma$, i.e.,  there are automorphisms  of vertex and arrow sets, both denoted by $a$, such that
$a(\o(h))= \o (a(h))$, $a(\i (h)) = \i (a(h))$ and $a(\bar h) = \overline{a(h)}$ for all $h\in H$. 
Assume that $a$ is compatible with the  function $\ve^0$ in the following sense. 
There exists a constant $c^0\equiv c_{a, \ve^0} \in \{ \pm 1\}$ such that 
\begin{align}
\label{compatible-pair}
\ve^0(a(h)) = c^0 \cdot \ve^0(h), \  \forall h\in H .
\end{align}
Let $a(V)$ be the $I$-graded vector space whose $i$-th component is $V_{a^{-1}(i)}$.
The dimension vector of $a(V)$ is  $a(\bv)$  whose $i$-entry is  $\bv_{a^{-1}(i)}$. 
Given any point $\bx=(x, p, q) \in  \bM(V, W)$, 
we define a point $a(\bx)=(a(x), a(p), a(q) )\in \bM(a(V), a(W))$
by
\[
a(p)_i=p_{a^{-1}(i)}, \ a(q)_i = q_{a^{-1}(i)}, \ 
a(x)_h= \ve^0(h)^{\frac{1-c^0}{2}}  x_{a^{-1}(h)}, \quad \forall i\in I, h\in H.
\]
It induces a diagram isomorphism on Nakajima varieties:
\begin{align}
\label{dia}
a: \M_{\zeta} (\bv, \bw) \to \M_{a(\zeta)} (a(\bv), a(\bw)).
\end{align}

\subsubsection{$\sigma$-Quiver varieties}

%In this subsection, $\zeta$ is assumed to be generic. 
Consider 
\begin{align}
\label{sigma}
\sigma: =a  S_{\w} \tau : \M_{\zeta}(\bv, \bw) \to \M_{- a \w(\zeta)} (a(\w *_\bw \bv), a(\bw)),
\end{align}
where $\tau$, $S_{\w}$ and $a$ are in (\ref{tau}), (\ref{sw}) and (\ref{dia}), respectively.
The $\sigma$-quiver variety is defined by
\begin{align}
\begin{split}
\fS_\zeta(\bv,\bw) = (\M_{\zeta}(\bv,\bw))^{\sigma}, \quad  \fS_1(\bv,\bw) = \pi (\fS_\zeta(\bv,\bw)),  \\ \mbox{if}\
\bw = a(\bw), \zeta= - a \w (\zeta), \bv= a(\w*_\bw \bv).
\end{split}
\end{align}
The proper map $\pi$ restricts to a proper morphism which is (\ref{piA2}) in type $A$:
\begin{align}
\pi_{\sigma} : \fS_{\zeta}(\bv,\bw) \to \fS_1(\bv, \bw).
\end{align}
$\fS_\zeta(\bv,\bw)$ has a symplectic structure  inherited from that of $\M_{\zeta}(\bv,\bw)$ and $\fS_1(\bv,\bw)$ is an affine variety as a closed subvariety of $\M_1(\bv,\bw)$.

For the rest of this section, we consider the Dynkin graph  of type $A_n$:
$1 \leftrightarrows 2 \leftrightarrows \cdots \leftrightarrows n$.
Set $\ve^0 (h) = i -j $ if $h$ is an arrow from $i$ to $j$ and $c^0=1$. 
The automorphism $a$ is the identity automorphism. 
The Weyl group element $\w$ is the longest Weyl group element. 
Let $\zeta = (\xi, 0)$ where all components in $\xi$ is $1$.
%In this case, we index the vertex set as $I=\{1, \cdots, n\}$. 
For any pair $(\bv, \bw)$,
we define a new pair  $(\widetilde \bv=(\widetilde \bv_i)_{1\leq i\leq n}, \widetilde \bw=(\widetilde \bw_i)_{1\leq i\leq n})$
where
\begin{align}
\label{tild-bw}
\widetilde \bv_i = \bv_i + \sum_{j\geq i+1} (j-i) \bw_j, \quad 
\widetilde \bw_i = \delta_{i, 1} \sum_{1\leq j \leq n} j \bw_j,\quad \forall 1\leq i\leq n.
\end{align}
Now set
$
\mu = ( \widetilde \bv_0 - \widetilde \bv_1, \widetilde \bv_1 - \widetilde \bv_2, \cdots, 
\widetilde \bv_{n-1} - \widetilde \bv_n, \widetilde \bv_n).
$
Let $P_{\mu}$ be a parabolic subgroup of a classical group $G$ whose levi has 
size indexed by $\mu$. 
In other words, the isotropic flag variety  $G/P_{\mu}$ is the collection of  all isotropic flags such
that the dimension difference of the $i$-th step flag and $(i+1)$-th step flag is $\widetilde \bv_{i-1} -\widetilde \bv_i$.
Note that $P_{\mu}$ may be empty.  Let $e_{P_\mu}$ be the associated Richardson element. 
%We have $\mu_i = \bw_i + \cdots + \bw_n - \bv_i + \bv_{i-1}$.
%Reorder the entries in $\mu$ in decreasing order: $\rho_1\geq \rho_2 \geq \rho_3 \geq \cdots \geq \rho_{n+1}$ and set
%$\mu' = 1^{\rho_1 -\rho_2} 2^{\rho_2-\rho_3} \cdots n^{\rho_n -\rho_{n+1}} (n+1)^{\rho_{n+1}}$.
Let $$\lambda = 1^{\bw_1} 2^{\bw_2}\cdots.$$ 
We write  $\tilde S_{e_{P_\mu},x}$ in Section~\ref{Slodowy} as $\tilde S^{\mrm{Lie}(G)}_{e_{P_\mu}, \lambda}$ when the Jordan type of $x$ is $\lambda$. 
The following result is obtained in~\cite[Corollary 8.3.4]{Li18}.

\begin{prop}
\label{8.3.4}
\begin{enumerate}
\item If $W_i$ is equipped with a symmetric (resp$.$ skew-symmetric) form  for $i$ even (resp$.$ odd), then 
$\fS_{\zeta} (\bv, \bw)  \cong 
\widetilde{\mathcal S}_{e_{P_\mu}, \lambda}^{\mathfrak o_{\widetilde \bw_1}}$ 
and $\fS_{1}(\bv, \bw) \cong  \mathcal S_{e_{P_\mu}, \lambda}^{\mathfrak o_{\widetilde \bw_1}}$.
\item 
If forms on $W_i$ are skew-symmetric (resp$.$ symmetric) for $i$ even (resp$.$ odd), then
$\fS_{\zeta} (\bv, \bw) \cong 
\widetilde{\mathcal S}_{e_{P_\mu}, \lambda}^{\mathfrak{sp}_{\widetilde \bw_1}}$ 
and 
$\fS_{1}(\bv, \bw) \cong  \mathcal S_{e_{P_\mu},\lambda}^{\mathfrak{sp}_{\widetilde \bw_1}}$. 
\end{enumerate}
\end{prop}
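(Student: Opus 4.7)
The plan is to reduce Proposition~\ref{8.3.4} to Maffei's identification of type-$A$ Nakajima varieties with Slodowy slices in $\mathfrak{gl}_N$, and then cut out the classical subalgebra by taking $\sigma$-fixed points. Concretely, I would first perform the framing-concentration trick (Crawley-Boevey style): using the sequence of reflection functors $S_i$ applied at vertices $2,3,\ldots,n$ to push all the framing toward vertex $1$, one obtains a natural isomorphism
\[
\M_\zeta(\bv,\bw) \;\cong\; \M_\zeta(\widetilde\bv,\widetilde\bw),
\]
where $(\widetilde\bv,\widetilde\bw)$ is given by the formulas in~(\ref{tild-bw}). Here the relations $\widetilde\bv_i=\bv_i+\sum_{j\geq i+1}(j-i)\bw_j$ and $\widetilde\bw_i=\delta_{i,1}\sum j\bw_j$ are exactly what one gets by repeatedly replacing an $\bw$-framing arrow at vertex $j$ by a chain of $V$-arrows running to vertex $1$. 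This reduction must of course be made $\sigma$-equivariantly, which is the first nontrivial step.

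Once the framing is concentrated at vertex $1$, I would invoke Maffei's theorem (in the form used by Nakajima for linear Springer resolutions) to identify
\[
\M_\zeta(\widetilde\bv,\widetilde\bw) \;\cong\; \widetilde{\mathcal S}^{\mathfrak{gl}_N}_{e_{P_\mu},\lambda},\qquad
\M_1(\widetilde\bv,\widetilde\bw) \;\cong\; \mathcal S^{\mathfrak{gl}_N}_{e_{P_\mu},\lambda},
\]
with $N=\widetilde\bw_1$, the parabolic $P_\mu$ given by the composition $\mu$, and Jordan type $\lambda=1^{\bw_1}2^{\bw_2}\cdots$. This is the $\mathfrak{gl}_N$ case of the statement (where there is no $\sigma$ to worry about), and yields the unconstrained Slodowy-slice / Springer picture underlying~(\ref{pi}).

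The heart of the argument is then to show that the involution $\sigma=S_{\w}\tau$ (with $a=\mathrm{id}$ and $\w$ the longest element of the symmetric group $\mathcal W$) corresponds, under Maffei's isomorphism, to a Chevalley-type anti-involution of $\mathfrak{gl}_N$ whose fixed subalgebra is $\mathfrak{so}_{\widetilde\bw_1}$ or $\mathfrak{sp}_{\widetilde\bw_1}$. The transpose $\tau$ carries each linear map to its adjoint with respect to the chosen forms on $V_i$ and $W_i$; the reflection $S_{w_0}$ accounts for the standard duality reversal $i\mapsto n+1-i$ of the type-$A$ diagram; and the sign function $\ve^0(h)=\o(h)-\i(h)$ enters through~(\ref{a-b}). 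I would then compute directly that the global symmetric/skew-symmetric form induced on the ambient space $\mbb C^N$ has symmetry type determined by the parity pattern of the forms $(-,-)_{W_i}$ prescribed in (1) or (2). Passing to $\sigma$-fixed points realizes $\fS_\zeta(\bv,\bw)$ as the intersection of $\widetilde{\mathcal S}^{\mathfrak{gl}_N}_{e_{P_\mu},\lambda}$ with the respective classical Lie subalgebra, which is precisely $\widetilde{\mathcal S}^{\mathfrak o_{\widetilde\bw_1}}_{e_{P_\mu},\lambda}$ or $\widetilde{\mathcal S}^{\mathfrak{sp}_{\widetilde\bw_1}}_{e_{P_\mu},\lambda}$; restricting along $\pi$ gives the statement for $\fS_1(\bv,\bw)$.

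The main obstacle is the sign-bookkeeping in the third step: one has to verify that the chain of reflections $S_{i_1}\cdots S_{i_l}=S_{w_0}$, composed with $\tau$, produces \emph{exactly} the right pairing on the $N$-dimensional ambient space, with the global sign alternation $(-1)^{i}$ coming from the choice of $\ve^0$ and from the signs $-q_i^*$ versus $p_i^*$ in the definition of $\tau$. The parity shift between symmetric and skew-symmetric forms on $W_i$ for $i$ even vs.\ odd is introduced precisely to absorb this alternation, so that the ambient pairing becomes uniformly symmetric in case (1) and uniformly skew-symmetric in case (2). Making this matching explicit, and checking that it is compatible with Maffei's parametrization of points in $\widetilde{\mathcal S}^{\mathfrak{gl}_N}_{e_{P_\mu},\lambda}$, is the technical core of the proof.
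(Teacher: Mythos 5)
You are proposing a proof for a statement that this paper does not actually prove: Proposition~\ref{8.3.4} is imported verbatim from \cite[Corollary 8.3.4]{Li18}, and the only ``proof'' in the present paper is that citation. Measured against the argument in \cite{Li18}, your overall strategy is the right one: reduce to Maffei's identification of the type-$A$ quiver varieties with framing concentrated at vertex $1$ with $\widetilde{\mathcal S}^{\mathfrak{gl}_N}_{e_{P_\mu},\lambda}$ and $\mathcal S^{\mathfrak{gl}_N}_{e_{P_\mu},\lambda}$, $N=\widetilde\bw_1$, and then show that $\sigma=S_{\w}\tau$ transports to the involution of $\mathfrak{gl}_N$ defined by a symmetric or skew-symmetric form on $\mbb C^N$, so that taking fixed points cuts out the orthogonal or symplectic slice, with the parity pattern of the forms on the $W_i$ governing which of the two occurs.

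There is, however, a concrete error in your first step, and the two steps you yourself flag as the hard ones are asserted rather than carried out. The error: you propose to realize $\M_\zeta(\bv,\bw)\cong\M_\zeta(\widetilde\bv,\widetilde\bw)$ by ``reflection functors applied at vertices $2,\dots,n$ to push the framing toward vertex $1$.'' But as (\ref{sw}) makes explicit, $S_{\w}:\M_{\zeta}(\bv,\bw)\to\M_{\w(\zeta)}(\w *_\bw \bv,\bw)$ preserves the framing vector $\bw$; no composition of reflection functors can convert $\bw$ into $\widetilde\bw$. The isomorphism you need is Maffei's transform in \cite[Theorem 8]{M05}, an explicitly and inductively constructed map that is not a reflection functor, and its equivariance with respect to $\tau$, $S_{\w}$, and the chosen bilinear forms is a genuinely separate verification (this is where most of the work in \cite{Li18} lies). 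Beyond that, on the resolved side the $\sigma$-fixed locus of $\widetilde{\mathcal S}^{\mathfrak{gl}_N}_{e_{P_\mu},\lambda}\subseteq T^*(\GL_N/P_\mu)$ is not literally ``the intersection with the classical subalgebra'': one must also identify the fixed flags with isotropic flags, i.e.\ match $(\GL_N/P_{\mu})^{\sigma}$ with the isotropic flag variety $G/P_{\mu}$, and check that the fixed locus has no extraneous components. None of these points is fatal --- they are all resolved in \cite{Li18} --- but your write-up names the technical core without supplying it, and the one mechanism you do specify (reflection functors changing the framing) does not work.
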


\section{$\mbb C^*$-action and the automorphism $\sigma$}

\label{new}

In this section we assume that $\zeta$ is generic and $\zeta_{\mbb C}=0$.
We study the compatibility of modified version of a $\mbb C^*$-action in~\cite[Section 5]{N94}  with the automorphism $\sigma$.
By using these analyses, we then provide proofs for Theorems~\ref{Main-1}-\ref{Main-3}.

\subsection{Compatibility}
To an orientation $\ve$ of $H$, not necessarily the same as $\ve^0$ in the definition of Nakajima varieties, we can define two $\C^*$-actions on $\bM(\bv,\bw)$ in (\ref{bM}).
The first one   is given by  $(t, \bx) \mapsto t \circ_{\ve} \bx$  where 
\begin{align}
\label{t1}
t \circ_{\ve} \bx = ( t^{\frac{1 + \ve (h)}{2}}  x_h, p_i, t q_i).
\end{align}
The second one is given by $(t,\bx) \mapsto t\star_{\ve}\bx$ where 
\begin{align}
\label{t2}
t \star_{\ve} \bx = (t^{\frac{1 + \ve (h)}{2}}  x_h, tp_i,  q_i).
\end{align}

It is clear that each $\mbb C^*$-action induces a $\mbb C^*$-action on $\M_{\zeta}(\bv,\bw)$ in (\ref{Nakajima}), in light of the assumption that $\zeta_{\mbb C}=0$,  but 
 the induced  ones on $\M_{\zeta}(\bv, \bw)$ coincide as follows so that we do not have to distinguish the two actions
 on $\M_{\zeta}(\bv,\bw)$.
 
\begin{lem}
\label{two-one}
We have $t\circ_{\ve} [\bx] = t \star_{\ve} [\bx]$ for all $[\bx]\in \M_{\zeta}(\bv,\bw)$.
\end{lem}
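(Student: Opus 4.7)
The plan is to exhibit an explicit element of $\G_{\bv}$ that carries $t\circ_{\ve}\bx$ to $t\star_{\ve}\bx$ in $\bM(\bv,\bw)$, thereby showing the two points lie in the same $\G_{\bv}$-orbit and hence define the same class in $\M_{\zeta}(\bv,\bw)$. Since the two actions differ only in where the factor of $t$ is placed between the framing maps $p_i$ and $q_i$ (both scale $x_h$ by the same factor $t^{(1+\ve(h))/2}$), the natural guess is the diagonal rescaling $g(t)=(t\,\mrm{Id}_{V_i})_{i\in I}\in \G_{\bv}$.

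First I would record how $\G_{\bv}$ acts on the three types of data: on $x_h$ by conjugation $g_{\i(h)}x_h g_{\o(h)}^{-1}$, on $p_i$ by left multiplication $g_i p_i$, and on $q_i$ by right multiplication $q_i g_i^{-1}$. With $g(t)$ scalar on every $V_i$, conjugation is trivial on $\oplus_h \Hom(V_{\o(h)},V_{\i(h)})$, so the $x$-components of $t\circ_{\ve}\bx$ are preserved. The action multiplies each $p_i$ by $t$ and each $q_i$ by $t^{-1}$; applied to $(t^{(1+\ve(h))/2}x_h,\,p_i,\,tq_i)$ this yields exactly $(t^{(1+\ve(h))/2}x_h,\,tp_i,\,q_i)$, which is $t\star_{\ve}\bx$.

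Next I would check that this identification descends correctly. Both $\circ_{\ve}$ and $\star_{\ve}$ preserve $\mu_{\mbb C}^{-1}(0)$ and the $\xi$-semistability condition because these conditions are $\G_{\bv}$-invariant and because the scaling weights on the moment map are the same for both actions (under $\zeta_{\mbb C}=0$ we only need invariance of $\mu_{\mbb C}^{-1}(0)$, not of level sets for nonzero values). Therefore $g(t)\cdot(t\circ_{\ve}\bx)=t\star_{\ve}\bx$ in $\Lambda^{\xi\text{-}ss}_0(\bv,\bw)$, and passing to $\G_{\bv}$-orbits gives the desired equality $t\circ_{\ve}[\bx]=t\star_{\ve}[\bx]$ in $\M_{\zeta}(\bv,\bw)$.

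There is essentially no obstacle here; the only thing to be careful about is that the element $g(t)=(t\,\mrm{Id}_{V_i})_{i\in I}$ genuinely belongs to $\G_{\bv}=\prod_i \GL(V_i)$ (which it does for $t\in\mbb C^*$) and that the intertwining is functorial in $t$, so that one obtains an equality of $\mbb C^*$-actions and not merely of individual orbits. Both points are immediate from the formulas, so the lemma reduces to the one-line orbit computation above.
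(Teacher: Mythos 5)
Your proposal is correct and is exactly the paper's argument: the paper's one-line proof also takes $g=(t\,\mrm{id}_{V_i})_{i\in I}$ and observes $g\cdot(t\circ_{\ve}\bx)=t\star_{\ve}\bx$. Your additional remarks about descent to the quotient and functoriality in $t$ are routine verifications the paper leaves implicit.
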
 

\begin{proof}
Let $g=(t.\mrm{id}_{V_i})_{i\in I}$. Then
$g. (t\circ_{\ve} \bx) =t \star_{\ve} \bx$ as required. 
\end{proof}

It is clear that the weight of the symplectic form on $\M_{\zeta}(\bv,\bw)$ with respect to this $\mbb C^*$-action is $1$, i.e.,
$\omega(t\circ_{\ve} [\bx], t\circ_{\ve}[\bx']) = t \omega([\bx],[\bx'])$.
 Since the graph is Dynkin,  the $\mbb C^*$-action provides a contraction from 
 $\M_0(\bv,\bw)$, and hence $\M_1(\bv,\bw)$,  to its $\mbb C^*$-fixed point $[0]$.

The following lemma is the compatibility of the transpose $\tau$ in Subsection~\ref{sub-tau} and the $\mbb C^*$-action.

\begin{lem}
\label{t-T}
We have $\tau ( t \circ_{\ve} [\bx]) = t \circ_{-\ve} \tau ([\bx])$ for all $t\in \mbb C^*$ and $[\bx]\in \M_{\zeta}(\bv, \bw)$. 
\end{lem}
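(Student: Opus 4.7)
The plan is a direct computation with the explicit formulas, but with one preliminary substitution that makes the naive comparison actually work on the nose. The asymmetry of $\tau$ with respect to the framing maps (it swaps $p_i$ and $q_i$, up to sign and passing to adjoints) is incompatible with the asymmetric scaling in $t\circ_\ve\bx$, which rescales $q_i$ but not $p_i$. So if I just apply $\tau$ directly to $t\circ_\ve\bx$, I will land at the point $(\ve^0(h)t^{(1+\ve(\bar h))/2}x_{\bar h}^*,\; -t q_i^*,\; p_i^*)$, while $t\circ_{-\ve}\tau(\bx)$ equals $(t^{(1-\ve(h))/2}\ve^0(h)x_{\bar h}^*,\; -q_i^*,\; t p_i^*)$. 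These two points agree in the first slot (using $\ve(\bar h)=-\ve(h)$) but differ in the framing slots — precisely by the overall scaling by $t$ on $V$-side that distinguishes $\circ_\ve$ from $\star_\ve$.

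This is exactly where Lemma~\ref{two-one} comes in. The plan is therefore: first invoke Lemma~\ref{two-one} to rewrite $t\circ_\ve[\bx]=t\star_\ve[\bx]$, then pick any lift $\bx$ and compute $\tau(t\star_\ve \bx)$ coordinate by coordinate using the definitions in~(\ref{t2}) and Subsection~\ref{sub-tau}. The arrow component becomes $\ve^0(h)(t^{(1+\ve(h))/2}x_h)^{\!*}$ transported to arrow $h$ via $\bar h\leftrightarrow h$, i.e., $\ve^0(h)t^{(1+\ve(\bar h))/2}x_{\bar h}^{\!*}$, which matches $t\circ_{-\ve}\tau(\bx)$ after applying $\ve(\bar h)=-\ve(h)$. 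The framing slots match immediately because $\star_\ve$ scales $p_i$ and $\tau$ sends $p_i$ to $q_i^*$, so that the $t$ ends up on the $q$-slot of $t\circ_{-\ve}\tau(\bx)$ where it belongs. So the equality of the two orbits is then immediate.

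No serious obstacle is expected; the only subtlety is the need to swap $\circ_\ve$ for $\star_\ve$ before the comparison, which is the whole point of having introduced the two actions and Lemma~\ref{two-one} in the first place.
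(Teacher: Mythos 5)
Your proposal is correct and is essentially the paper's own argument: the paper computes $\tau(t\circ_{\ve}\bx)=t\star_{-\ve}\tau(\bx)$ coordinatewise and then invokes Lemma~\ref{two-one} at the end to pass from $\star_{-\ve}$ to $\circ_{-\ve}$ on orbits, whereas you invoke the same lemma at the start to replace $\circ_{\ve}$ by $\star_{\ve}$ before applying $\tau$ — the two are trivially equivalent rearrangements of the identical computation. Your diagnosis of where the $\circ$/$\star$ mismatch arises and why Lemma~\ref{two-one} is the fix is exactly right.
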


\begin{proof}
We write $t\circ_{\ve} \bx= (t\circ_{\ve} x_h, t\circ_{\ve}p_i, t\circ_{\ve} q_i)_{h\in H, i\in I}$ and
$\tau(t\circ_{\ve}[\bx])= [(x'_h, p'_i, q'_i)]$. We have
\begin{align*}
x'_h &= \ve(h) (t\circ_{\ve} x_{\bar h})^* = \ve (h). ( t^{\frac{1+ \ve (\bar h)}{2}} x_{\bar h})^* = t^{\frac{1+ \ve (\bar h)}{2}}  \tau (x_h) ,\\
p'_i & = - (t\circ_{\ve} q_i)^* = -( tq_i)^*= t \tau(p_i),\\
q'_i & = (t\circ_{\ve} p_i)^*= p^*_i = \tau (q_i).
\end{align*}
This shows that $(x'_h, p'_i, q'_i)=  t \star_{-\ve} \tau (\bx)$, and the  Lemma follows readily by Lemma~\ref{two-one}.
\end{proof}

Let $a$ be an automorphism of $\Gamma$. 
We assume that the pair $(a,\ve)$ is compatible with signature $c \in \{\pm 1\}$, see (\ref{compatible-pair}). 
%if  $\ve (a(h)) = c \ve(h)$ for $h\in H$. 
We have the following compatibility of the automorphism $a$ and the $\mbb C^*$-action.

\begin{lem}
\label{t-a}
Let $(a,\ve)$ be a compatible pair with signature $c$. 
Then $a(t \circ_{\ve} [\bx]) = t \circ_{c\ve} a([\bx])$, for all $t\in \mbb C^*$ and $[\bx]\in \M_{\zeta}(\bv, \bw)$.
\end{lem}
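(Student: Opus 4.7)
The plan is to verify the identity at the level of the representation space $\bM(\bv,\bw)$, in a manner parallel to the proof of Lemma~\ref{t-T} for $\tau$. Since both $\circ_\ve$ and $a$ descend from operations defined on $\bM$, and since the $\G_\bv$-action commutes with $\circ_\ve$ (the formulas for $g.\bx$ involve no scalars depending on $t$), it suffices to produce, for a chosen lift $\bx$ of $[\bx]$, an honest equality of points in $\bM(a(V),a(W))$. I would pick a representative $\bx = (x_h, p_i, q_i)$ and unravel both sides.

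First, apply $\circ_\ve$ then $a$. One gets $t\circ_\ve \bx = (t^{(1+\ve(h))/2} x_h,\, p_i,\, tq_i)$, and then $a$ of this has $i$-th $p$-component $p_{a^{-1}(i)}$, $i$-th $q$-component $t\,q_{a^{-1}(i)}$, and $h$-th $x$-component
\[
\ve^0(h)^{(1-c^0)/2}\,t^{(1+\ve(a^{-1}(h)))/2}\,x_{a^{-1}(h)}.
\]
Next, apply $a$ then $\circ_{c\ve}$. One has $a(\bx)_h = \ve^0(h)^{(1-c^0)/2} x_{a^{-1}(h)}$, $a(p)_i=p_{a^{-1}(i)}$, $a(q)_i=q_{a^{-1}(i)}$, so $t\circ_{c\ve} a(\bx)$ has the same $p$- and $q$-components, and $h$-th $x$-component
\[
t^{(1+c\ve(h))/2}\,\ve^0(h)^{(1-c^0)/2}\,x_{a^{-1}(h)}.
\]
Comparing the two expressions, the $p$- and $q$-components already agree on the nose; the $x$-components match provided $\ve(a^{-1}(h)) = c\,\ve(h)$.

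The last identity is where the compatibility hypothesis is used: by definition $\ve(a(h'))=c\,\ve(h')$ for all $h'$; setting $h'=a^{-1}(h)$ and using $c^2=1$ yields exactly $\ve(a^{-1}(h)) = c\,\ve(h)$. This completes the equality in $\bM$, which then descends to the required equality in $\M_\zeta$ after passing to $\G_{a(\bv)}$-orbits.

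No serious obstacle is expected — the argument is purely bookkeeping on the $t$-exponents carried by the arrow components, analogous to the proof of Lemma~\ref{t-T}. The one delicate point is simply to notice that the $a$-map introduces the sign twist $\ve^0(h)^{(1-c^0)/2}$, which is independent of $t$ and therefore passes harmlessly through both sides; it is the twist $\ve(a^{\pm 1}(h))$ living in the $t$-exponent that has to be balanced, and this is precisely what the compatibility of $(a,\ve)$ with signature $c$ accomplishes.
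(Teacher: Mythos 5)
Your proof is correct and takes essentially the same route as the paper's: both compute the two sides on a representative in $\bM(\bv,\bw)$, note that the $p$- and $q$-components and the $t$-independent sign twist match trivially, and reduce the arrow components to the identity $\ve(a^{-1}(h))=c\,\ve(h)$, which is exactly the compatibility hypothesis. If anything your write-up is marginally cleaner, since you get an equality on the nose in $\bM$ and need not route the conclusion through the $\star$-action and Lemma~\ref{two-one}.
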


\begin{proof}
We write $t\circ_{\ve} \bx= (t\circ_{\ve} x_h, t\circ_{\ve}p_i, t\circ_{\ve} q_i)_{h\in H, i\in I}$ and
$a(t\circ_{\ve}\bx)= (x'_h, p'_i, q'_i)$. We have
\begin{align*}
x'_h & = \ve(h)^{\frac{1-c}{2}} t\circ_{\ve} x_{a^{-1}(h)} = \ve(h)^{\frac{1-c}{2}} t^{\frac{1+ \ve(a^{-1}(h))}{2}} x_{a^{-1}(h)} 
= t^{\frac{1+ c \ve(h)}{2}} a(x)_h,\\
p'_i  & = t\circ_{\ve} p_{a^{-1}(i)} = p_{a^{-1}(i)} = a(p)_i,\\
q'_i & = t \circ_{\ve} q_{a^{-1}(i)} = t . q_{a^{-1}(i)}  = t .a(q)_i.
\end{align*}
So $(x'_h, p'_i, q'_i)=  t \star_{-\ve} a (\bx)$. The Lemma follows. 
\end{proof}

The following lemma is the compatibility of the reflection functor $S_i$ and the $\mbb C^*$-action.

\begin{lem}
\label{t-S}
We have $S_i (t\circ_{\ve}[\bx]) = t\circ_{\ve} S_i([\bx])$ for all $t\in \mbb C^*$ and $[\bx]\in \M_{\zeta}(\bv,\bw)$. 
\end{lem}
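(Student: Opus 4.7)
My plan is to fix representatives $\bx$ and $\bx'$ with $S_i([\bx]) = [\bx']$ so that the pair $(\bx,\bx')$ satisfies the defining conditions (R1)--(R4) of the reflection functor, and then to verify directly that the scaled pair $(t\circ_\ve\bx,\, t\circ_\ve\bx')$ again satisfies (R1)--(R4) with \emph{no additional} gauge correction on $V_i'$. Once this is done, the very definition of $S_i$ forces $S_i(t\circ_\ve[\bx]) = [t\circ_\ve\bx'] = t\circ_\ve S_i([\bx])$, which is the lemma.

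The key step will be to track how the maps $a_i(\cdot)$ and $b_i(\cdot)$ from (\ref{a-b}) transform under the action (\ref{t1}). I expect to absorb the entire local scaling at vertex $i$ into a single invertible endomorphism $A_t$ of the auxiliary space $U_i = W_i \oplus \bigoplus_{h:\,\o(h)=i} V_{\i(h)}$, defined by $A_t|_{W_i} = t\cdot\mrm{id}$ and $A_t|_{V_{\i(h)}} = t^{(1+\ve(h))/2}\cdot\mrm{id}$. With this choice a direct calculation should yield
\[
a_i(t\circ_\ve\bx) = A_t\circ a_i(\bx), \qquad b_i(t\circ_\ve\bx) = t\cdot b_i(\bx)\circ A_t^{-1},
\]
and the analogous identities with $\bx'$ in place of $\bx$. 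The only calculation that needs care is the $b_i$-identity, which rests on the cancellation $(1+\ve(h))/2 + (1+\ve(\bar h))/2 = 1$ coming from $\ve(\bar h) = -\ve(h)$, together with the observation that the sign $\ve^0(\bar h)$ appearing inside $b_i$ is inert under the $\mbb C^*$-scaling.

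Given those two factorizations, the verification of (R1)--(R4) for the scaled pair becomes essentially formal. The sequence in (R1) for $(t\circ_\ve\bx,\, t\circ_\ve\bx')$ is obtained from the one for $(\bx,\bx')$ by conjugation with the invertible operator $A_t$, hence remains exact; for (R2), I will compute $a_i(t\circ_\ve\bx)b_i(t\circ_\ve\bx) - a_i(t\circ_\ve\bx')b_i(t\circ_\ve\bx') = tA_t\bigl(a_i(\bx)b_i(\bx) - a_i(\bx')b_i(\bx')\bigr)A_t^{-1}$, which vanishes by the original (R2) together with $\zeta_{\mbb C}^{(i)}=0$; (R3) is immediate because away from vertex $i$ the components of $\bx$ and $\bx'$ agree and are scaled by the same recipe; and (R4) is automatic, since the $\mbb C^*$-action is already known to descend to $\M_\zeta(\bv,\bw)$ and therefore preserves the moment-map fibers at every vertex $j\ne i$.

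The main obstacle, such as it is, will be the exponent bookkeeping forced by the coexistence of two unrelated orientations — the auxiliary $\ve$ used to define the $\mbb C^*$-action and the fixed $\ve^0$ hard-coded into $b_i$. The argument succeeds precisely because these decouple cleanly once the antisymmetry $\ve(h)+\ve(\bar h)=0$ is used, so that the whole local picture at vertex $i$ collapses to a single conjugation by $A_t$ and no extra gauge correction on $V_i'$ is required.
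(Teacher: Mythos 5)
Your proposal is correct and follows essentially the same route as the paper: one verifies (R1)--(R4) directly for the pair $(t\circ_\ve\bx,\,t\circ_\ve\bx')$, with the key cancellation $\tfrac{1+\ve(h)}{2}+\tfrac{1+\ve(\bar h)}{2}=1$ showing that $b_i a_i$ and $a_i b_i$ simply pick up a factor of $t$ (and vanish since $\zeta_{\mbb C}=0$). Your packaging of the local scaling into the conjugating operator $A_t$ on $U_i$ is a tidy organizational refinement -- it makes exactness in (R1) immediate rather than relying on injectivity, surjectivity and a dimension count -- but the substance of the argument is the same.
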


\begin{proof}
Let $S_i([\bx])=[\bx']$. It suffices to show that the pair $(t\circ_{\ve}\bx, t\circ_{\ve} \bx')$ satisfies the conditions (R1)-(R4) 
in the definition of reflection functors. 
Recall $a_i(\bx)$ and $b_i(\bx)$ from (\ref{a-b}).  
% that 
%\[
%a_i(\bx) = (q_i, x_h)_{h: \o (h) =i},  b_i(\bx) = (p_i, \ve (\bar h) x_h)_{h: \i (h) =i} .
%\]
There is 
\[
a_i( t\circ_{\ve}\bx) = (t q_i, t^{\frac{1+\ve(h)}{2}} x_h)_{h: \o (h) =i} , 
b_i(t\circ_{\ve}\bx) = (p_i, t^{\frac{1+\ve(h)}{2}} \ve^0(\bar h) x_h)_{h: \i(h)=i}.
\]
Thus we must have
\[
b_i(t\circ_{\ve} \bx) a_i(t\circ_{\ve} \bx') = t b_i(\bx) a_i(\bx') =0, 
\]
Clearly, $b_i(t\circ_{\ve}\bx)$ is surjective since $b_i(\bx)$ is so and 
$a_i(t\circ_{\ve}\bx')$ is injective since $a_i(\bx')$ is so.
Hence (R1) holds for the pair $(t\circ_{\ve}\bx, t\circ_{\ve} \bx')$.
Similarly, there is 
\[
a_i(t\circ_{\ve} \bx) b_i(t\circ_{\ve}\bx) - a_i(t\circ_{\ve} \bx') b_i(t\circ_{\ve} \bx') = t \left (a_i(\bx) b_i(\bx) - a_i(\bx')  b_i(\bx') \right ) =0.
\]
This shows that the pair $(t\circ_{\ve}\bx, t\circ_{\ve} \bx')$ satisfies (R2). 
The condition (R3) for $(t\circ_{\ve}\bx, t\circ_{\ve} \bx')$ is clearly followed from definition.
The condition (R4) for $(t\circ_{\ve}\bx, t\circ_{\ve} \bx')$ can be proved in a similar way as that of (R2). 
The Lemma thus follows. 
\end{proof}

By combining Lemmas~\ref{t-T}, \ref{t-a} and \ref{t-S}, we have the following proposition.

\begin{prop}
\label{t-sigma}
Let $(a, \ve)$ be a compatible pair with signature $c$.
Then we have 
$$\sigma (t \circ_{\ve} [\bx]) = t \circ_{- c \ve} \sigma([\bx]), \forall t\in \mbb C^*, [\bx]\in \M_{\zeta}(\bv, \bw).$$
%and $\sigma(t\star_{\ve} [\bx]) = t\circ_{-c\ve} \sigma([\bx])$.
\end{prop}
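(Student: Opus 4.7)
The plan is to observe that the proposition is a direct consequence of the three compatibility lemmas just established, applied in the order in which the factors of $\sigma$ act. Recall $\sigma = a \circ S_{\w} \circ \tau$, so the strategy is to trace a point $t \circ_{\ve} [\bx]$ through each of the three operators in turn, using Lemma \ref{t-T} to move the $\mathbb{C}^*$-action past $\tau$, then Lemma \ref{t-S} (iterated along a reduced expression $\w = s_{i_1} \cdots s_{i_l}$) to move it past $S_{\w}$, and finally Lemma \ref{t-a} to move it past $a$.

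Concretely, the first step transforms $\tau(t \circ_{\ve} [\bx])$ into $t \circ_{-\ve} \tau([\bx])$, swapping the orientation $\ve$ for $-\ve$. In the second step, each $S_{i_j}$ commutes with $t \circ_{-\ve}$ by Lemma \ref{t-S} (the orientation is irrelevant to that commutation), so composing them gives $S_{\w}(t \circ_{-\ve} \tau([\bx])) = t \circ_{-\ve} S_{\w}(\tau([\bx]))$. In the third step, I need to apply Lemma \ref{t-a} to the orientation $-\ve$ rather than $\ve$; the key small check is that if the pair $(a, \ve)$ has signature $c$, meaning $\ve(a(h)) = c\,\ve(h)$, then $(a, -\ve)$ also has signature $c$, since $-\ve(a(h)) = -c\,\ve(h) = c(-\ve(h))$. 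Lemma \ref{t-a} applied with orientation $-\ve$ therefore yields $a(t \circ_{-\ve} [\by]) = t \circ_{-c\ve} a([\by])$, and taking $[\by] = S_{\w}(\tau([\bx]))$ produces the desired identity $\sigma(t \circ_{\ve} [\bx]) = t \circ_{-c\ve} \sigma([\bx])$.

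There is essentially no obstacle beyond careful bookkeeping of the orientation at each stage; the content is entirely packaged in the preceding three lemmas. The only point deserving a sentence of justification is that $S_{\w}$ is defined only up to a choice of reduced word and yet its interaction with the $\mathbb{C}^*$-action is independent of that choice, which is automatic once we know each individual $S_i$ commutes with $t \circ_{-\ve}$. So the proof can be written in three displayed lines, one for each of the three steps above, followed by the composition. No further calculation in coordinates is needed.
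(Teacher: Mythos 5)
Your proposal is correct and is exactly the argument the paper intends: the paper derives Proposition~\ref{t-sigma} by combining Lemmas~\ref{t-T}, \ref{t-a} and \ref{t-S} in precisely the order dictated by $\sigma = a\, S_{\w}\, \tau$, with the orientation passing from $\ve$ to $-\ve$ to $-c\ve$. Your explicit check that $(a,-\ve)$ still has signature $c$ is a detail the paper leaves implicit, but it is the right (and only) point needing verification.
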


From Proposition~\ref{t-sigma} and the above analysis, we have readily

\begin{prop}
\label{Nice}
\begin{enumerate}
\item If  $t\circ_{\ve} [\bx]= t\circ_{-c\ve} [\bx]$, for all $t\in \mbb C^*$ and for all $[\bx]\in \M_{\zeta}(\bv,\bw)$,
then the $\mbb C^*$-action in (\ref{t1}) on $\M_{\zeta}(\bv,\bw)$ induces
a $\mbb C^*$-action on $\fS_{\zeta}(\bv, \bw)$ such that the weight of the symplectic form $\omega$ on $\fS_{\zeta}(\bv,\bw)$ is $1$ with respect to this $\mbb C^*$-action. 

\item If  $t\circ_{\ve} [\bx]= t\circ_{-c\ve} [\bx]$, for all $t\in \mbb C^*$ and $[\bx]\in \M_{\zeta}(\bv,\bw)$,
then the $\mbb C^*$-action  provides a contraction
of $\fS_1(\bv,\bw)$  to its fixed-point $\fS_1(\bv,\bw)^{\mbb C^*}$
consisting of a single point $[0]$.
\end{enumerate}
\end{prop}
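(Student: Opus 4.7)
The plan is to deduce both statements directly from Proposition~\ref{t-sigma} together with the given hypothesis. Combining them one obtains
\[
\sigma(t\circ_{\ve}[\bx])\;=\;t\circ_{-c\ve}\sigma([\bx])\;=\;t\circ_{\ve}\sigma([\bx]),
\]
so the $\mbb C^{*}$-action $t\circ_{\ve}$ commutes with $\sigma$ on $\M_{\zeta}(\bv,\bw)$. In particular it preserves the $\sigma$-fixed locus $\fS_{\zeta}(\bv,\bw)$, which produces the induced $\mbb C^{*}$-action required in part (1). The weight claim is then inherited: it was already noted that $t^{*}\omega=t\omega$ on $\M_{\zeta}(\bv,\bw)$ under $t\circ_{\ve}$, and the symplectic form on the smooth subvariety $\fS_{\zeta}(\bv,\bw)$ is by definition the restriction of $\omega$, so the same weight equation persists downstairs.

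For part (2), I would observe that the same commutation persists on $\M_{1}(\bv,\bw)$: the automorphism $\sigma$ descends to an automorphism of $\M_{0}(\bv,\bw)$ (through the affinization construction, which is independent of $\zeta$), hence to $\M_{1}(\bv,\bw)$, and this descended $\sigma$ commutes with the $\mbb C^{*}$-action because $\pi$ is $\mbb C^{*}$-equivariant. Consequently the $\mbb C^{*}$-action on $\M_{1}(\bv,\bw)$ preserves the closed subvariety $\fS_{1}(\bv,\bw)=\pi(\fS_{\zeta}(\bv,\bw))$. Since $\Gamma$ is Dynkin, the action contracts $\M_{1}(\bv,\bw)$ onto its unique fixed point $[0]$; restricting to the invariant closed subvariety $\fS_{1}(\bv,\bw)$ then gives a contraction of $\fS_{1}(\bv,\bw)$ onto $\fS_{1}(\bv,\bw)\cap\{[0]\}$. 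Non-emptiness of this intersection follows from the same commutation: since $\sigma$ commutes with the $\mbb C^{*}$-action on $\M_{1}(\bv,\bw)$ and $[0]$ is the unique $\mbb C^{*}$-fixed point, $\sigma([0])=[0]$, so $[0]\in\fS_{1}(\bv,\bw)$.

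The genuine mathematical work has already been carried out in Proposition~\ref{t-sigma}; what remains is bookkeeping, namely transferring the $\mbb C^{*}$-action and the contraction from $\M_{\zeta}$ and $\M_{1}$ to their $\sigma$-fixed loci. The one point that plausibly requires thought, the non-emptiness of the $\mbb C^{*}$-fixed locus of $\fS_{1}(\bv,\bw)$, is dispatched in a single line by the commutation argument, so I do not expect a substantive obstacle to arise.
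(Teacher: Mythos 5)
Your argument is correct and coincides with the paper's, which simply observes that the identity $\sigma(t\circ_{\ve}[\bx])=t\circ_{-c\ve}\sigma([\bx])$ of Proposition~\ref{t-sigma} combined with the hypothesis makes the $\mbb C^*$-action commute with $\sigma$, so that it restricts to the fixed locus with the weight and contraction properties inherited from $\M_{\zeta}(\bv,\bw)$ and $\M_1(\bv,\bw)$. The only cosmetic difference is that for the membership $[0]\in\fS_1(\bv,\bw)$ one can argue even more directly: $\fS_1(\bv,\bw)$ is closed and $\mbb C^*$-invariant, so the limit $\lim_{t\to 0}t\cdot y$ of any of its points lies in it and is fixed, hence equals $[0]$.
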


The following proposition provides  compatible cases sufficient to prove our theorems.

\begin{prop}
\label{WW}
\begin{enumerate}
\item If $c=-1$, then $t\circ_{\ve} [\bx] = t\circ_{-c\ve} [\bx]$, $\forall t\in \mbb C^*$ and $[\bx]\in\M_{\zeta}(\bv,\bw)$.

\item Assume $c=1$ and   $\bw_i \bw_j=0$ if $i$ and $j$ are joined by an edge.
Let $I=I^{1} \sqcup I^{0}$ be a partition satisfying the following conditions.
\begin{itemize}
\item For all $i\in I^{0}$, we have $\bw_i=0$. 
\item For all $h\in \ve^{-1} (1)$, we have $\o (h)  \in I^{1}$ and  $\i(h) \in I^0$.
%\item $a(I^1) = I^1$ and $a(I^0) = I^0$. 
\end{itemize}
Then $t\circ_{\ve} [\bx] = t\circ_{-c\ve} [\bx]$ for all  $t\in \mbb C^*$ and $[\bx]\in\M_{\zeta}(\bv,\bw)$.
\end{enumerate}
\end{prop}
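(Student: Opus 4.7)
Part~(1) I would dispose of immediately: when $c=-1$ one has $-c\ve=\ve$ and the asserted equality $t\circ_{\ve}[\bx]=t\circ_{-c\ve}[\bx]$ is a tautology with nothing to verify. All substance of the proposition therefore lies in part~(2), and I restrict attention to it.

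For part~(2) I plan to exhibit, for each $t\in\mbb C^*$ and each representative $\bx\in \bM(\bv,\bw)$, an element $g=g(t)\in\G_{\bv}$ with $g\cdot(t\circ_{\ve}\bx)=t\circ_{-\ve}\bx$, since this is exactly what it means for the two $\G_{\bv}$-orbits to coincide in $\M_{\zeta}(\bv,\bw)$. Motivated by the rescaling trick used in Lemma~\ref{two-one}, I would scale only on the ``$I^0$-side'' of the partition, setting
$$
g_i=\begin{cases} t^{-1}\,\mrm{id}_{V_i}, & i\in I^0,\\ \mrm{id}_{V_i}, & i\in I^1. \end{cases}
$$

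The verification then splits according to the sign of $\ve(h)$. For $h\in \ve^{-1}(1)$ the hypothesis forces $\o(h)\in I^1$ and $\i(h)\in I^0$, so $g_{\i(h)}(t x_h) g_{\o(h)}^{-1}=t^{-1}\cdot t x_h\cdot 1=x_h$, which matches the coefficient $t^{(1-\ve(h))/2}=1$ prescribed by $t\circ_{-\ve}\bx$. For $h\in \ve^{-1}(-1)$, applying the same hypothesis to $\bar h$ (together with $\o(\bar h)=\i(h)$ and $\i(\bar h)=\o(h)$) gives $\o(h)\in I^0$ and $\i(h)\in I^1$, and one computes $g_{\i(h)} x_h g_{\o(h)}^{-1}=1\cdot x_h\cdot t=tx_h$, which is again the required value for $t\circ_{-\ve}\bx$. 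The framing components cause no trouble: for $i\in I^0$ one has $\bw_i=0$, hence $p_i$ and $q_i$ vanish; for $i\in I^1$ the element $g_i$ is the identity, so $g_ip_i=p_i$ and $tq_ig_i^{-1}=tq_i$ coincide with the $p$- and $q$-components of $t\circ_{-\ve}\bx$.

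The only place I expect any obstacle at all is packaging the orientation hypothesis cleanly enough to cover the two cases $\ve(h)=\pm 1$ with a single $g$; once the involution $h\mapsto\bar h$ is invoked to transfer the partition condition from $\ve^{-1}(1)$ to $\ve^{-1}(-1)$, everything reduces to a direct substitution. Note that the assumption $\bw_i\bw_j=0$ for adjacent $i,j$ does not enter the argument of this proposition itself; it functions only to guarantee in applications that an admissible partition $I=I^1\sqcup I^0$ with $\bw|_{I^0}=0$ and the required orientation compatibility exists.
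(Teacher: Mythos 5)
Your proof is correct and follows essentially the same route as the paper: both exhibit a group element of $\G_{\bv}$ that rescales the vertex spaces on one side of the bipartition $I=I^1\sqcup I^0$. The only (cosmetic) difference is that your $g$ equals the paper's $g_\kappa=(t^{\kappa_i}\mrm{id}_{V_i})$ times the global scalar $t^{-1}$, so you land directly on $t\circ_{-\ve}\bx$ rather than on $t\star_{-\ve}\bx$ followed by an appeal to Lemma~\ref{two-one}.
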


\begin{proof} 
The first statement is obvious. Let $c=1$.
It is enough to show that $g. (t\circ_{\ve} \bx) =  t\star_{-c\ve} \bx$. 
Let $\kappa_i$ be the parity of $i$, i.e., $\kappa_i=1$ if $i\in I^{1}$ and $\kappa_i =0$ if $i\in I^{0}$. 
Let $g_{\kappa} =(t^{\kappa_i} \mrm{id}_{V_i})_{i\in I}\in \G_{\bv}$.
Then we have the following computations.
\begin{align*}
g_{\kappa}. (t\circ_{\ve} x_h)  & = t^{-1} (t\circ_{\ve} x_h) =  x_h,  \ && \ \mbox{if} \ h\in \ve^{-1} (1) ,\\
g_{\kappa}. (t\circ_{\ve}x_h)  &= t (t\circ_{\ve} x_h) = t x_h, \ && \ \mbox{if} \ h \in \ve^{-1}(-1),\\
g_{\kappa} .(t\circ_{\ve} p_i)  &= t(t\circ_{\ve} p_i) = tp_i,  \ && \ \mbox{if} \ i\in I^1,\\
g_{\kappa} . (t\circ_{\ve} q_i ) &= t^{-1} (t\circ_{\ve} q_i) = q_i, \ && \ \mbox{if} \ i\in I^1.
\end{align*}
Since $p_i=0, q_i=0$ for all $i \in I^0$, the above computation shows that 
$g_{\kappa}. (t\circ_{\ve} \bx) =  t\star_{-c\ve} \bx$. 
The proof is thus finished. 
\end{proof}

\subsection{The proof of Theorems~\ref{Main-1},~\ref{Main-2} and \ref{Main-3}}

Since $\Gamma$ is a Dynkin graph, hence bipartite, so we can find a partition of $I$ such that 
the first condition in Proposition~\ref{WW} holds. Now set $\ve$ to be the unique orientation 
such that the second condition in Proposition~\ref{WW} is valid.
Since $c=1$, we see that the automorphism $a$ is compatible with the orientation $\ve$. 
In this case, the results in Proposition~\ref{Nice} are true and so Theorem~\ref{symplectic}
is applicable and from which Theorem~\ref{Main-3} follows.

In light of Proposition~\ref{8.3.4}, Theorem~\ref{Main-2}, and hence Theorem~\ref{Main-1}, follows from Theorem~\ref{Main-3}.
Note that we must show that all parabolic subgroups, up to conjugations, appear in the setting
of Proposition~\ref{8.3.4}.
But this is already  observed in Maffei's work~\cite[Theorem 8]{M05}. 

%\begin{rem}
%The condition that $r_i = r_{n-i}$ in~\cite{M05}  is equivalent to the condition $\bv = w_0*_{\bw} \bw$. 
%The arxiv version of Maffei's work contains more on this in Sections 1.3 and 1.4.
%\end{rem}

The proof of Theorems~\ref{Main-1},~\ref{Main-2} and \ref{Main-3} is finished.

\subsection{A generalization of Theorem~\ref{Main-3}}

In Proposition~\ref{WW}, there is no assumption on $\bw$ when $c=-1$, which is not stated in Theorem~\ref{Main-3},  and the above argument works 
in this more general case as well. Let us record this more general result here.

\begin{thrm}
\label{Main-4}
Let $(a, \ve^0)$ be a compatible pair of signature $c=-1$. 
Then
the fiber  $(\pi_{\sigma})^{-1} ([0])$ is Lagrangian 
in $\fS_{\zeta}(\bv, \bw)$. 
\end{thrm}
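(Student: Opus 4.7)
The plan is to reproduce the argument used to prove Theorem \ref{Main-3}, but to observe that when the signature is $c=-1$ the compatibility of the $\mbb C^*$-action with $\sigma$ is automatic---for \emph{any} orientation and with \emph{no} constraint on $\bw$---so that the bipartite partition of $I$ and the special choice of orientation needed in the $c=+1$ case become superfluous.

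Concretely, I would take $\ve := \ve^0$, the orientation already fixed in the definition of $\M_\zeta(\bv,\bw)$. Since $c_{a,\ve^0}=-1$ by hypothesis, we have $-c\ve = \ve$, so Proposition \ref{t-sigma} yields
\[
\sigma(t \circ_{\ve^0} [\bx]) = t \circ_{\ve^0} \sigma([\bx])
\]
for every $t \in \mbb C^*$ and $[\bx] \in \M_\zeta(\bv,\bw)$; equivalently, Proposition \ref{WW}(1) supplies the hypothesis of Proposition \ref{Nice} for free. Consequently the $\mbb C^*$-action (\ref{t1}) restricts to $\fS_\zeta(\bv,\bw) = \M_\zeta(\bv,\bw)^\sigma$, descends via $\pi_\sigma$ to $\fS_1(\bv,\bw)$, has weight $1$ on the symplectic form inherited by $\fS_\zeta(\bv,\bw)$, and contracts $\fS_1(\bv,\bw)$ onto its unique $\mbb C^*$-fixed point $[0]$.

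The final step is to apply Theorem \ref{symplectic} to $\pi_\sigma: \fS_\zeta(\bv,\bw) \to \fS_1(\bv,\bw)$. Properness follows from the properness of $\pi$; smoothness and the symplectic structure of $\fS_\zeta(\bv,\bw)$ are standard; and $\fS_1(\bv,\bw)$ is affine as a closed subvariety of $\M_1(\bv,\bw)$. All the $\mbb C^*$-hypotheses having been verified in the previous paragraph, one concludes that $\pi_\sigma^{-1}([0])$ is Lagrangian in $\fS_\zeta(\bv,\bw)$. The main---but rather mild---point to check is the transfer of the symplectic structure and the $\mbb C^*$-action from $\M_\zeta(\bv,\bw)$ to the $\sigma$-fixed subvariety; once this bookkeeping is in place, the proof is essentially a direct translation of the argument for Theorem \ref{Main-3} to the more permissive setting $c=-1$.
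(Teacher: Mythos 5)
Your proposal is correct and follows essentially the same route as the paper: the paper's own justification of Theorem~\ref{Main-4} is precisely the observation that Proposition~\ref{WW}(1) makes the hypothesis of Proposition~\ref{Nice} automatic when $c=-1$ (since $-c\ve=\ve$), so the argument for Theorem~\ref{Main-3} goes through with no bipartite partition and no assumption on $\bw$. Taking $\ve=\ve^0$ and invoking Proposition~\ref{t-sigma} followed by Theorem~\ref{symplectic} is exactly the intended proof.
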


\section{Spaltenstein's examples}
\label{Example}
In this section, we discuss examples in~\cite[11.6, 11.8]{Sp82}, except 11.8 c).
We show that $X_x^P$ is Lagrangian  in all these examples, except the counterexample in~\cite[11.6]{Sp82}.
%by dimension consideration or applying Theorem~\ref{Main-2}.

\subsection{\cite[11.6]{Sp82}}
\label{11.6}
%In this subsection, we shall reproduce Spaltenstein's example in~\cite[11.6]{Sp82}
%that $X^P_x$ is not pure dimensional in general.

Let us fix a basis $\{e_i\}_{1\leq i\leq 8}$ of $\mbb C^8$.
Let $B(-,-)$ be the bilinear form defined by $B(e_i, e_j) = \delta_{i, 9-j}$ for all $1\leq i, j\leq 8$, so that
the associated symmetric matrix is the anti-diagonal identity matrix. 
Let $G=\mrm{SO}_8(\mbb C)$ be the special orthogonal group of $B(-,-)$ and $\mathfrak{so}_8(\mbb C)$ be its Lie algebra.
Let $x$ be an element of the form
\[
x=
\begin{bmatrix}
x_1 & 0 & 0 \\
0 & x_2 & 0 \\
0 & 0 & -x_1
\end{bmatrix} \
x_1 =
\begin{bmatrix}
0 & 1 \\
0 & 0
\end{bmatrix}\
x_2 =
\begin{bmatrix}
0 & 1 & 1 & 0\\
0 & 0 & 0 & -1\\
0 & 0 & 0 & -1\\
0 & 0 & 0 & 0
\end{bmatrix}
\]
Then it is clear that $x$ is of Jordan type $(1, 2^2, 3)$ and is a nilpotent element in $\mathfrak{so}_8(\mbb C)$.
Let $G/P$ be the isotropic flag variety of isotropic subspaces $F_2\subseteq F_3$ in $\mbb C^8$ such that
$\dim F_2=2$ and $\dim F_3=3$. 
Then 
the Spaltenstein variety $X_x^P$ of the triple ($\mrm{SO}_8(\mbb C), P, x)$ is the  subvariety of $G/P$ consisting of elements $(F_2\subset F_3)$ such that 
$
x(F_2) =0,
x(F_3)\subseteq F_2,
x(F_3^{\perp}) \subseteq F_3.
$
There is a partition of $X_x^P=X_3\sqcup X_2$ where 
\[
X_3= \{ (F_2\subseteq F_3)\in X^P_x | e_3\in F_2\},\
X_2= \{ (F_2\subseteq F_3)\in X^P_x | e_3\not\in F_2\}.
\]
One can check that $X_3$ and $X_2$ are irreducible of dimension $3$ and $2$, respectively. 
Indeed, for a fixed flag  $F_2$ in $X_3$, the freedom of $F_3$ is $\mrm{OGr}(1, 4)$, the Grassmannian of isotropic lines in $\mbb C^4$.
The dimension of $\mrm{OGr}(1,4)$ is  $2$, hence the dimension of $X_3$ is 3.
For a fixed flag $F_2$ in $X_2$, there is a unique flag $F_3$, i.e., $F_3 = \langle F_2, e_3\rangle$.
Thus the dimension of $X_2$ is $2$. 
%It implies readily 

So the irreducible components of $X_x^P$ are $X_3$ of dimension $3$ 
and the closure of $X_2$ in $X^P_x$ of dimension 2. 
Hence $X_x^P$ is not pure dimensional.

Let $Q$ be a parabolic subgroup such that $G/Q$ is the isotropic flag varieties 
of all  flags $F_1\subset F_2 \subset F_4$ such that $\dim F_i = i$. 
From~\cite[11.6]{Sp82},  $X_x^Q$ is  irreducible and  of dimension 3. 
Let $e_Q$ be the Richardson element associated to $Q$. Then it can be shown that
$\dim \tilde S_{e_Q,x} = 6$, hence $X_x^Q$ is Lagrangian in $\tilde S_{e_Q, x}$.
This example is not in the cases (a)-(c) in the introduction.

\subsection{\cite[11.8. a)]{Sp82}}

If $G$ is of type $A_n$ (resp. $D_n$; $E_6$; $E_7$; $E_8$), $\dim X^B_x =2$, with $B$ a Borel,  and $P$ is minimal, then
$X^P_x$ is a union of projective lines in a configuration of type $A_{n-2}$
(resp. $A_1$ or $D_{n-2}$, the last is only possible if $n\geq 5$; $A_5$; $D_6$; $E_7$). 
%In particular, $X_x^P$ is connected.  
The condition $\dim X^B_x=2$ implies that $\dim \mathcal O_x= \dim T^*G/B -4$ and $P$ is minimal implies that 
$\dim T^*G/P=\dim T^*G/B -2$. So dimension of $\tilde S_{e_P, x}$ is 2, and thus $X^P_x$ is Lagrangian in $\tilde S_{e_P,x}$.

%For $G$ of type $D_n$, the codimension 4 orbit $\mathcal O_x$ has of Jordan type $(1^3, (2n-3))$ and hence Theorem~\ref{Main-2} is applicable in this case. 

\subsection{\cite[11.8. b)]{Sp82}}
Let $G=\mrm{SO}_7(\mbb C)$,  with $x$  of type $(3, 1^4)$ and $G/P$ a maximal isotropic Grassmannian.
Then $X_x^P$ is a disjoint union of two projective lines.
By Theorem~\ref{Main-2}, $X_x^P$ is Lagrangian in $\tilde S_{e_P,x}$.

\subsection{\cite[11.8. d]{Sp82}}

Let $G=\mrm{Sp}_{4n+2}(\mbb C)$ and $G/P$ is a partial flag variety obtained from the complete flag by dropping the $(2i+1)$-th step for all $0\leq i\leq n$,
$x$ is a nilpotent of type $((2n)^2, 2^1)$.
Then $X_x^P$ is a union of $2n+1$ projective lines subject to certain conditions.
From Theorem~\ref{Main-2}, $X_x^P$ is Lagrangian in $\tilde S_{e_P,x}$.

\subsection{}
By the rectangular symmetry in~\cite{Li18}, one can produce more examples from previous subsections. 
For example, the corresponding case in Section~\ref{11.6} for $(G, P, x)$ is
$G'=\mrm{Sp}_{12}(\mbb C)$, 
$P'$ is chosen such that $G'/P'$ is isomorphic to the isotropic flag varieties of $(F_2\subset F_5)$ with  $\dim F_i =i$,
and $x'$ is of Jordan type $(2, 3^2, 4)$. Then $X^{P'}_{x'} \cong X^P_x$ is not pure dimensional.


\begin{thebibliography}{99999}\frenchspacing

\bibitem[BM83]{BM83}
W. Borho and R. MacPherson, 
{\em Partial resolutions of nilpotent varieties}, Ast\'{e}risque {\bf 101}-{\bf 102} (1983), 23–74.

%\bibitem[B08]{B08}
%J. Brundan, 
%{\em Symmetric functions, parabolic category O and the Springer fiber}, 
%Duke Math. J. 143 (2008), 41-79.

\bibitem[G08]{G08}
V. Ginzburg, 
{\em Harish-Chandra bimodules for quantized Slodowy slices}, 
Represent. Theory {\bf 13}  (2009), 236-271. 

\bibitem[G09]{G09}
V. Ginzburg,
{\em Lectures on Nakajima's quiver varieties},
in {\em Geometric methods in representation theory. I}, 145-219, 
S\'{e}min. Congr., 24-I, Soc. Math. France, Paris, 2012. 
%arXiv:0905.0686.

%\bibitem[Kh04]{Kh04}
%M. Khovanov,
%{\em Crossingless matchings and the cohomology of (n,n) Springer varieties}, 
%Communications in Contemporary Math. 6 (2004) no.2, 561-577, arXiv:math.QA/0202110.

%\bibitem[KP82]{KP82}
%H. Kraft and C. Procesi,
%{\em On the geometry of conjugacy classes in classical groups}, 
%Comment. Math. Helv. {\bf 57} (1982), 539-602.

\bibitem[Li19]{Li18}
Y. Li,
{\em Quiver varieties and symmetric pairs},
Represent. Theory, {\bf 23} (2019), 1-56.
 \href{https://arxiv.org/abs/1801.06071}{arXiv:1801.06071}.

\bibitem[L00]{L00} 
G. Lusztig, 
{\em Quiver varieties and Weyl group actions,}  
Ann. Inst. Fourier (Grenoble) {\bf 50} (2000), no. 2, 461-489. 

\bibitem[M02]{M02}
A. Maffei,
{\em A remark on quiver varieties and Weyl groups},
Ann. Scuola Norm. Sup. Pisa Cl. Sci. {\bf 5} (2002), 649-686.

\bibitem[M05]{M05}
A. Maffei,
{\em Quiver varieties of type $A$},
Comment. Math. Helv. {\bf 80} (2005), 1-27.

\bibitem[N94]{N94}
H. Nakajima,
{\em Instantons on ALE spaces, quiver varieties, and Kac-Moody algebras,}
Duke Math. J. {\bf 76} (1994), 365-416.

\bibitem[N98]{N98}  
H. Nakajima, 
{\em Quiver varieties and Kac-Moody algebras},  
Duke Math. J. {\bf 91} (1998), no. 3, 515-560. 
        
\bibitem[N03]{N03} 
H. Nakajima,
{\em Reflection functors for quiver varieties and Weyl group actions},
Math. Ann. {\bf 327} (2003),  671-721.
%\bibitem[N00]{N00}
%H. Nakajima,
%{\em Quiver varieties and finite dimensional representations of quantum affine algebras},
%JAMS {\bf 14} (2000) no. 1, 145-238.


\bibitem[Sl80]{Sl80}
P. Slowdowy,
{\em Four lectures of simple groups and singularities},
Comm. Math. Inst. Rijksuniv. Utrecht {\bf 11}, Math. Inst. Rijksuniv., Utrecht, 1980.


\bibitem[Sp76]{Sp76}
N. Spaltenstein, 
{\em The fixed point set of a unipotent transformation on the flag manifold}, 
Indagationes Mathematicae, {\bf 38} (1976), 452-456. 

\bibitem[Sp77]{Sp77}
N. Spaltenstein, 
{\em On the fixed point set of a unipotent element on the variety of Borel subgroups}, 
Topology {\bf 16} (1977), 203-204.


\bibitem[Sp82]{Sp82}
N. Spaltenstein, 
{\em Classes unipotentes et sous-groupes de Borel}, 
Lecture Notes in Mathematics, {\bf 946}.  Springer-Verlag, Berlin-New York, 1982. 

\bibitem[Spr76]{Spr76}
T.A. Springer, 
{\em Trigonometric sums, Green functions of finite groups and representations of Weyl groups},
Invent. Math. {\bf 36} (1976), 173-207.

\bibitem[St74]{St74}
R. Steinberg,  
{\em Conjugacy classes in algebraic groups}. Notes by Vinay V. Deodhar. 
Lecture Notes in Mathematics, Vol. {\bf 366}. Springer-Verlag, Berlin-New York, 1974. 

%\bibitem[V79]{V79}
%J. Vargas, 
%{\em Fixed points under the action of unipotent elements of SL, in the flag variety}, Bol. Sot.
%Mat. Mexicana {\bf 24} (1979) 1-14.

%\bibitem[W11]{W11}
%B. Webster, 
%{\em Singular blocks of parabolic category $\mathscr O$ and finite $W$-algebras}, 
%J. Pure Appl. Algebra, {\bf 215}, 2797-2804. 





\end{thebibliography}
\end{document}